\numberwithin{equation}{section}
\newtheorem{theorem}{Theorem}[section]
\newtheorem{assumption}[theorem]{Assumption}
\newtheorem{corollary}[theorem]{Corollary}
\newtheorem{lemma}[theorem]{Lemma}
\newtheorem{proposition}[theorem]{Proposition}
\newtheorem{remark}[theorem]{Remark}
\newenvironment{proof}[1][Proof]{\textbf{#1.} }{\ \rule{0.5em}{0.5em}}
\def\sube{\hat{\mathbb{E}}}
\def\rd{\mathbb{R}^d}
\title{A monotone scheme for G-equations with application to the explicit convergence rate of robust central limit theorem
\thanks{Research partially supported by National Natural Science Foundation of China (No. 12171169) and Laboratory of Mathematics for Nonlinear Science, Fudan University.
The authors thank the referee, and
especially Mingshang Hu, Lianzi Jiang, Shige Peng and Yongsheng Song for their
helpful comments on the manuscript.}}
\author{Shuo Huang\thanks{Department of Statistics, The University of Warwick, Coventry CV4
7AL, U.K. \texttt{s.huang.13@warwick.ac.uk}}\and Gechun Liang\thanks{Department of Statistics, The University of Warwick, Coventry CV4
7AL, U.K. \texttt{g.liang@warwick.ac.uk} }}
\date{}
\begin{document}
\maketitle
\begin{abstract}
We propose a monotone approximation scheme for a class of fully nonlinear PDEs called G-equations.
Such equations arise often in the characterization of G-distributed random variables in
a sublinear expectation space. The proposed scheme is constructed recursively based on a piecewise constant approximation
of the viscosity solution to the G-equation. We establish the convergence of the
scheme and determine the convergence rate with an explicit error bound, using the comparison principles for both the scheme and the equation together with a mollification procedure.
The first application is
obtaining the convergence rate of Peng's robust central limit theorem with an explicit bound of Berry-Esseen type. The second application is an approximation scheme with its convergence rate for the Black-Scholes-Barenblatt equation.
\\

\noindent\textit{Keywords}: sublinear expectation, \and G-equation, \and G-distribution, \and robust central limit theorem, Black-Scholes-Barenblatt equation, \and
\and monotone scheme.\\

\noindent\textit{MSC2010 subject classifications}: 60F05, 60H30, 65M15.

\end{abstract}


\section{Introduction}

The theory of G-expectations (see \cite{Peng2008,Peng2008(2),Peng2010,Peng2010(2)}) is a natural generalization of classical probability theory in the presence of Knightian uncertainty. That is, random outcomes are evaluated, not using a single probability measure, but using the supremum over a range of possibly mutually singular probably measures. One of the fundamental results in the theory is the celebrated central limit theorem, dubbed as \emph{robust central limit theorem} by Peng in \cite{Peng2010}. It  provides a theoretical foundation for the widely used G-distributed random variables in nonlinear probability and statistics. The theorem was first proved in \cite{Peng2008} by applying the regularity theory of fully nonlinear PDEs (see \cite{Krylov3} and \cite{Wang}) to G-equations, the latter of which characterize G-distributed random variables. However, no convergence rate was derived in \cite{Peng2008}. The corresponding convergence rate was subsequently obtained
in \cite{Song} and \cite{FPSS} using Stein's method and more recently in \cite{Krylov4} using stochastic control method under different model assumptions. However, an explicit formula for the constant appearing in the convergence rate is still lacking.

In this paper, we build a monotone approximation scheme for the
G-equation, and determine its convergence rate by obtaining an
\emph{explicit} error bound between the approximate solution and
the viscosity solution of the G-equation.  \emph{This will in turn, for the first time, provide the convergence rate for Peng's robust central limit
theorem with an explicit bound of Berry-Esseen type.} The new
convergence {rate} improves all the existing ones obtained under
different model assumptions in the literature. Moreover, different
from \cite{FPSS}, \cite{Krylov4} and \cite{Song}, our method is
analytical and is developed under the framework of the monotone
approximation schemes for viscosity solutions. Thus, it unveils an
intrinsic connection between the convergence analysis of numerical
schemes in PDEs and the central limit theorem in probability. It
also introduces new tools from the numerical analysis for viscosity
solutions to the study of G-expectations {and} especially its robust
central limit theorem.\\

Let's first introduce Peng's G-equation. Let $(\Omega, \mathcal{H},
\sube)$ be a sublinear expectation space, supporting two
$d$-dimensional random vectors $X$ and $Y$. Recall that $\sube$ is a
sublinear expectation if it satisfies \emph{monotonicity, constant
preserving, sub-additivity} and \emph{positive homogeneity}
properties (see Chapter 1 in \cite{Peng2010} or
(\ref{axiom1})-(\ref{axiom4}) in section \ref{sec-scheme} for
further details). With the random vectors $(X,Y)$ and the sublinear
expectation $\sube$, we introduce the nonlinear function
$G:\rd\times\mathbb{S}(d)\to\mathbb{R}$ as
\begin{equation}\label{functionG}
G(p,A):=\sube\left[\langle p,Y\rangle+\frac12\langle AX,X\rangle\right],
\end{equation}
for $(p,A)\in\rd\times\mathbb{S}(d)$, where $\mathbb{S}(d)$ is the collection of all $d$-dimensional symmetric matrixes.

For $T\ge 1$, let $Q_T:=(0,T]\times\rd$. We consider the fully-nonlinear parabolic PDE defined on the parabolic domain $Q_T$,
\begin{equation}\label{PDE_1}
\partial_tu-G(D_xu,D_{x}^2u)=0,
\end{equation}
with initial condition
\begin{equation}\label{cc}
u|_{t=0}=\phi.
\end{equation}
In \cite{Peng2008,Peng2008(2),Peng2010,Peng2010(2)}, the PDE (\ref{PDE_1}) is referred {to} as the G-equation, which is used to characterize G-distribution. More specifically, let $(\xi, \zeta)$ be a pair of G-distributed $d$-dimensional random vectors {characterized by (\ref{PDE_1})} under another sublinear expectation $\widetilde{\mathbb{E}}$ (possibly different from $\sube$). That is, the G-distributed random vectors $(\xi,\zeta)$ satisfies
\begin{equation}\label{Gdistributed}
\widetilde{\mathbb{E}}\left[\langle p,\zeta\rangle+\frac12\langle A\xi,\xi\rangle\right]=G(p,A)
\end{equation}
for $(p,A)\in\rd\times\mathbb{S}(d)$, and
for $a,b\in\mathbb{R}^d$ and $(\tilde{\xi},\tilde{\zeta})$ as an independent copy of $(\xi, \zeta)$, the following equality holds in distribution sense:
$$\left(a\xi+b\tilde{\xi},a^2\zeta+b^2\tilde{\zeta}\right)\stackrel{d}{=}
\left(\sqrt{a^2+b^2}\xi,(a^2+b^2){\zeta}\right).$$
Moreover, $\zeta$ is called maximal distributed in the sense that there exists a bounded, closed and convex subset $Q\subset \mathbb{R}^d$ such that
\begin{equation}\label{maximal}
\widetilde{\mathbb{E}}[\psi(\zeta)]=\max_{q\in Q}\psi(q),
\end{equation}
for any continuous function $\psi$ satisfying linear growth condition.
Note that the existence of $(\xi, \zeta)$ is guaranteed by Proposition 4.2 of \cite{Peng2008}.
Then, it has been proved in Proposition 4.8 of \cite{Peng2008} that
(\ref{PDE_1})-(\ref{cc}) admits a unique viscosity solution $u$ which admits the representation
\begin{equation}\label{solution}
u(t,x)=\widetilde{\mathbb{E}}[\phi(x+\sqrt{t}\xi+t\zeta)],
\end{equation}
provided that the initial data $\phi$ satisfies some regularity condition. However, it is not clear how to explicitly solve (\ref{PDE_1})-(\ref{cc}) in order to
characterize the G-distributed random vectors $(\xi, \zeta)$ except for some special cases, so a numerical scheme for (\ref{PDE_1})-(\ref{cc}) is needed.\\

In this paper, we propose a numerical scheme to approximate the viscosity solution $u$ of
(\ref{PDE_1})-(\ref{cc}) by merely using the random vectors $(X,Y)$ under $\sube$ as input. Note that $(X,Y)$ could  follow arbitrary distributions (one example is shown in section \ref{section: BSB_equ}). For $\Delta\in(0,1)$, we introduce $u^\Delta: [0,T]\times\rd\to\mathbb{R}$ recursively as
\begin{equation}\label{scheme}
u^\Delta(t,x)=\sube[u^\Delta(t-\Delta,x+\sqrt{\Delta}X+\Delta Y)]\mathbf{1}_{\{t\ge \Delta\}}+\phi(x)\mathbf{1}_{\{t<\Delta\}}.
\end{equation}
The above recursive approximation implies that, for any $n\in\mathbb{N}$ such that $n\Delta\le T$ and $t\in[n\Delta,((n+1)\Delta)\wedge T)$, $u^{\Delta}(t,\cdot)$ is a constant in $t$ and is given by
\begin{equation}\label{interp}
u^\Delta(t,\cdot)\equiv u^\Delta(n\Delta,\cdot),
\end{equation}
and at time $n\Delta$, there is a jump of the size
$$u^\Delta(n\Delta,\cdot)-u^\Delta((n-1)\Delta,\cdot)=
\sube[u^\Delta((n-1)\Delta,\cdot+\sqrt{\Delta}X+\Delta Y)]-u^\Delta((n-1)\Delta,\cdot).
$$

The main result of the paper is proving the convergence of $u^{\Delta}$ to $u$ and determining its convergence rate by obtaining the
explicit error bounds between the approximate solution and
the viscosity solution of the G-equation.

For this, we impose the following assumptions throughout the paper.

\begin{assumption}\label{assumption1}
(i) The initial data $\phi:\mathbb{R}^d\rightarrow\mathbb{R}$ is bounded from below, and
$\beta$-H\"older continuous for some $\beta\in(0,1]$,
$$|\phi(x)-\phi(y)|\le C_\phi|x-y|^\beta,$$
\text{for} $x, y\in\rd$.

(ii) The random vectors $X$ and $Y$ satisfy the moment conditions: $M_X^3<\infty$ and $M_Y^2<\infty$, with $M_{\xi}^p:=\sube[|\xi|^p]$. Moreover, $X$ has no mean uncertainty, i.e. $\sube[X]=\sube[-X]=0.$

\end{assumption}

We make some comments on the above assumptions.

\begin{remark}
Assumptions (i) and (ii) are standard in the (robust) central limit theorem literature. The regularity of the initial condition $\phi$ implies the regularity of the viscosity solution $u$ (see Lemma \ref{regularity}). The bounded from below property of $\phi$ guarantees the Fatou's property of $\sube$ (see (\ref{Fatou}) or Lemma 2.6 in \cite{CJP}), which will in turn be used to establish an upper bound for the approximation error (see (\ref{super2}) in section \ref{lowbd}).

On the other hand, the moment conditions on $X$ and $Y$ are commonly used in the classical central limit theorem and imply that $M_X^p<\infty$ and $M_Y^q<\infty$ for $0<p<3$ and $0<q<2$. In our setting, they are used to derive the consistency error estimates in section \ref{sec-scheme}.

Finally, we emphasize that there are no independence assumptions made
between $X$ and $Y$. If $X$ and $Y$ are mutually independent, then either $(X,Y)$ must be maximally distributed or one of them is null (see \cite{HU}). The possible dependency between $X$ and $Y$ will be useful when applying the proposed approximation scheme to
the Black-Scholes-Barenblatt equation in section \ref{section: BSB_equ}.

\end{remark}


Under the above assumptions, we prove the following results about the convergence of $u^{\Delta}$ to $u$ and the corresponding convergence rate.

\begin{theorem}\label{maintheorem} Suppose that Assumption \ref{assumption1} is satisfied. Then, the following assertions hold.

(i) (Convergence) The approximate solution $u^{\Delta}\rightarrow u$ {as $\Delta\rightarrow 0$, (locally)} uniformly in $\bar{Q}_T$.

(ii) (Degenerate case) For $\Delta\in(0,1)$, there {exists} a constant $C$ depending only on \textcolor[rgb]{1.00,0.00,0.00}{$d$}, $T$, $C_{\phi}$, $\beta$, $M_X^3$ and $M_Y^2$ such that
$$|u-u^{\Delta}|\le C\Delta^{\beta/6}\ \text{in}\ \bar{Q}_{T}.$$
Furthermore, if the dimension $d=1$ and $T=1$, then the constant $C$ has an explicit formula
\begin{equation}\label{explicit_formula}
C=2124C_{\phi}\left[1+(M_X^3)^{\frac{\beta}{3}}+(M_Y^2)^{\frac{\beta}{2}}\right]
\left[1+(M_X^3)^{\frac23}+M_X^3+(M_Y^2)^{\frac12}+M_Y^2\right].
\end{equation}

(iii) (Non-degenerate case) Furthermore, if the second moment of the random vector $X$ is non-degenerate, i.e. $$\underline{\sigma}^2:=-\sube[-|X|^2]>0,$$
and the initial data $\phi\in\mathcal{C}_b^1(\mathbb{R}^d)$, i.e. $\phi$ is bounded and Lipschitz continuous, then there exists a constant $\alpha\in(0,1)$ {depending on $d$, $\underline{\sigma}^2$ and $M_X^2$} such that $u\in\mathcal{C}_b^{1+\frac{\alpha}{2},2+\alpha}({Q}_{T})$ \textcolor[rgb]{1.00,0.00,0.00}{(see the end of this section for its definition)}. Moreover, for $\Delta\in(0,1)$, {there exists a constant $C$ depending only on $T$, $C_{\phi}$, $\alpha$, $M_X^{2+\alpha}$ and $M_Y^2$ such that}
$$|u-u^{\Delta}|\leq C{\Delta}^{\max\{\frac{\alpha}{2},\frac16\}}\ \ \text{in}\ \bar{Q}_T.$$
\end{theorem}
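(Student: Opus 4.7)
The plan is to treat (\ref{scheme}) as a monotone scheme in the Barles--Souganidis sense and extract an explicit convergence rate via the comparison principles for both the G-equation and the scheme, combined with a mollification procedure following Krylov and Barles--Jakobsen. The two ingredients are: (a) a one-step consistency estimate on smooth test functions, obtained by Taylor expansion using the symmetry $\sube[X]=\sube[-X]=0$ (which cancels the $\sqrt{\Delta}$ term), the moment conditions $M_X^3<\infty$ and $M_Y^2<\infty$, and the sub-additivity/positive homogeneity of $\sube$, yielding
\begin{multline*}
\varphi(t,x) - \sube[\varphi(t-\Delta,x+\sqrt{\Delta}X+\Delta Y)] \\
= \Delta\bigl(\partial_t\varphi - G(D_x\varphi,D_x^2\varphi)\bigr)(t,x) + R(\Delta,\varphi),
\end{multline*}
with $|R(\Delta,\varphi)| \leq C\Delta^{3/2}\|D_x^3\varphi\|_\infty + C\Delta^2\|\partial_t^2\varphi\|_\infty$; and (b) comparison principles both for the PDE (from Peng's theory) and, via monotonicity of $\sube$, for the scheme. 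Once (ii) is established, the (locally) uniform convergence in (i) follows by sending $\Delta \to 0$.

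For part (ii), I would obtain $|u^\Delta - u| \leq C\Delta^{\beta/6}$ via a dual mollification argument that exploits the convexity of $G$ in $(p,A)$ (a free consequence of sub-additivity and positive homogeneity of $\sube$). For the upper bound $u^\Delta - u \leq C\Delta^{\beta/6}$, mollify $u$ by a standard bump $\rho_\epsilon$ at scale $\epsilon$: by Lemma~\ref{regularity}, $u$ is $\beta$-H\"older in $x$, so $\|u_\epsilon - u\|_\infty \leq C\epsilon^\beta$ and $\|D^k u_\epsilon\|_\infty \leq C\epsilon^{\beta-k}$, and Jensen's inequality with the convexity of $G$ makes $u_\epsilon$ a classical super-solution of (\ref{PDE_1}); the consistency estimate then promotes $u_\epsilon$ to an approximate super-solution of (\ref{scheme}), and scheme comparison yields $u^\Delta \leq u_\epsilon + (\text{accumulated errors})$. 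Summing the per-step errors over the $T/\Delta$ steps and balancing $\Delta^{1/2}\epsilon^{\beta-3} + \epsilon^\beta$ at $\epsilon \sim \Delta^{1/6}$ gives the rate. The matching lower bound $u - u^\Delta \leq C\Delta^{\beta/6}$ is obtained dually: mollify $u^\Delta$ in space to a smooth $u^\Delta_\epsilon$, use sub-additivity of $\sube$ applied to the positive kernel $\rho_\epsilon$ together with the recursion (\ref{scheme}) to show that $u^\Delta_\epsilon$ is a super-solution of the \emph{scheme}, then use the consistency estimate to deduce that it is an approximate super-solution of the G-equation, and conclude by PDE comparison with $u$. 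This direction is the main obstacle: $u^\Delta$ is only piecewise constant in $t$ with jumps at $n\Delta$, so one must separately establish H\"older regularity of $u^\Delta$ (inherited from $\phi$ through the recursion) and carefully handle the jumps before invoking the consistency estimate. The Fatou property of $\sube$, guaranteed by $\phi$ being bounded from below, is used to pass $\sube$ through these mollification limits. Tracking each constant in the 1D case with an explicit choice of mollifier yields the formula (\ref{explicit_formula}).

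Part (iii) is a direct consequence once the regularity of $u$ is known. When $\underline{\sigma}^2 > 0$, the G-equation is uniformly parabolic and Krylov--Safonov / Wang's interior regularity theory produces $u \in \mathcal{C}_b^{1+\alpha/2,2+\alpha}(Q_T)$ for some $\alpha \in (0,1)$ depending on $d$, $\underline{\sigma}^2$ and $M_X^2$. No mollification is then needed: plugging $u$ directly into the consistency estimate, the $C^{2+\alpha}$ regularity sharpens the remainder to $C\Delta^{1+\alpha/2}$ per step, and summation over $T/\Delta$ steps gives the rate $\Delta^{\alpha/2}$. Combined with the rate $\Delta^{1/6}$ from (ii) applied with $\beta=1$ (since $\phi \in \mathcal{C}_b^1$ is Lipschitz), this yields $\Delta^{\max\{\alpha/2, 1/6\}}$.
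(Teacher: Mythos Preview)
Your proposal is correct and follows essentially the same route as the paper: a Barles--Souganidis monotone scheme analysis with Krylov/Barles--Jakobsen mollification, using convexity of $G$ (resp.\ concavity of the scheme) to produce smooth supersolutions on each side, and balancing $\epsilon=\Delta^{1/6}$; part (iii) is likewise handled by plugging the $\mathcal{C}^{1+\alpha/2,2+\alpha}$ solution directly into the consistency estimate.

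Two small points where your sketch diverges from the paper's execution. First, you write ``mollify $u^\Delta$ in space''; the paper mollifies in \emph{space and time} with the parabolic scaling $\rho_\varepsilon(t,x)=\varepsilon^{-(2+d)}\rho(t/\varepsilon^2,x/\varepsilon)$, which is what allows the consistency estimate (involving $\partial_t^2\psi$ and $\partial_t D_x^j\psi$) to apply to $u^\Delta_\varepsilon$ and what makes the time-regularity Lemma~\ref{regularity1}(ii) (with its extra $\Delta^{\beta/2}$ term) feed into the derivative bounds (\ref{mollifier1}). Space-only mollification leaves $u^\Delta_\varepsilon$ piecewise constant in $t$, so the ``approximate supersolution of the G-equation'' step would need a separate discrete-in-time argument; the space--time mollification avoids this. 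Second, your consistency remainder $C\Delta^{3/2}\|D_x^3\varphi\|_\infty+C\Delta^2\|\partial_t^2\varphi\|_\infty$ omits the $\sqrt{\Delta}\,|D_x^2\varphi|_0$ contribution coming from the $\Delta Y$ drift and the cross term $\Delta^{3/2}XY$ (see the paper's term $(II)$ in Proposition~\ref{operator}), as well as the $\Delta(|\partial_t D_x\psi|_0+|\partial_t D_x^2\psi|_0)$ pieces from the time shift inside $G$. These are lower order and do not affect the rate, but they are needed to track the explicit constant in (\ref{explicit_formula}). Finally, the paper proves (i) independently via semi-relaxed limits, whereas you deduce it from (ii); your shortcut is valid and more economical.
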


Assertion (i) is proved in section \ref{sec-convergence}, assertion (ii) is proved in sections \ref{upbd} and \ref{lowbd}, and assertion (iii) is proved in section \ref{sec-special}.
We prove them under the framework of monotone approximation schemes for viscosity solutions.
The first step is to rewrite the recursive approximation (\ref{scheme}) as a monotone scheme (see (\ref{semischeme})), and then derive
the key properties for the monotone scheme in section \ref{sec-scheme}.
It is precisely where the four axioms of the sublinear expectation $\sube$ are
used in an essential way. Using the consistency error estimates derived in section \ref{sebsection: splitting} and the comparison principle for the approximation
scheme established in section \ref{sec-comparison}, we obtain a
lower bound for the approximation error by a mollification procedure.
The upper bound for the approximation
error is further obtained by interchanging the roles of the monotone approximation scheme and the original
G-equation. This depends crucially on the regularity property of the approximate solution established in section \ref{sec-estimate}. Finally, the non-degenerate situation (iii) is proved as a special case of the general (possibly degenerate) situation established in (ii), together with an application of the regularity theory of fully nonlinear PDEs.\\

Next, we give a literature review on monotone schemes followed by the comments on their specification in our setup under G-expectations. Monotone approximation schemes for viscosity solutions were first studied by
Barles and Souganidis \cite{Barles}, who showed that any \emph{monotone,
stable} and \emph{consistent} approximation scheme converges to the correct
solution, provided that there exists a comparison principle for the
limiting equation. The corresponding convergence rate had been an
open problem for a long time until late 1990s when Krylov introduced
the {shaking coefficients technique} to construct a sequence of
smooth subsolutions/supersolutions in \cite{Krylov1} and \cite{Krylov}. This technique was further
developed by Barles and Jacobsen in a sequence of papers (see
\cite{BJ1} and \cite{Jakobsen} and more references therein), and
{has} recently been applied to solve various problems (see, among
others, \cite{Erhan} \cite{BPZ} \cite{FTW}, \cite{HL1} and \cite{Huang}).

Krylov's technique depends crucially on the convexity/concavity of
the underlying equation with respect to its terms. As a result,
unless the approximate solution has enough regularity (so one can
interchange the roles of the approximation scheme and the original
equation), the {shaking coefficients technique} only gives either an
upper or a lower bound for the approximation error, but not both. A
further breakthrough was made by Barles and Jacobsen in \cite{BJ0}
and \cite{BJ}, who combined the ideas of optimal switching
approximation of Hamilton-Jacobi-Bellman equations with the shaking
coefficients technique. They obtained both upper and lower bounds of
the {error estimate}, but with a lower {convergence rate} due to the
introduction of another approximation layer. See also \cite{CS} for its recent development in a bounded domain without any convexity/concavity assumptions.

In the setup of G-expectations, the sublinear expectation $\sube$ and the possible dependency between random vectors $X$ and $Y$ bring in additional difficulties when applying the monotone scheme. Specifically, the regularity properties of (approximate) solutions and the consistency error estimates are derived under the framework of G-expectations (see section \ref{sec-estimate} and Proposition \ref{operator}), where the four axioms of the sublinear expectation $\sube$ are
used in an essential way (without any independency assumption between $X$ and $Y$).
On the other hand, since there are no variable coefficients to shake in order to
apply the mollification procedure to construct the smooth subsolutions/supersolutions, the corresponding convergence rate for the approximate solution to the viscosity solution turns out to be faster than the ones in the PDE literature
(It is ${\beta}/{6}$ in our case).
Moreover, by establishing almost the same regularity property for the approximate solution $u^{\Delta}$ as for the viscosity solution $u$ in Lemma \ref{regularity1}, we are able to interchange the roles of the G-equation and its approximation scheme, and thus obtain a symmetric upper bound and lower bound for the approximation error which is rare in the PDE literature (see \cite{BJ1} and \cite{Jakobsen}). We also work out explicit formulae for all the constants in our estimates. This enables us to derive an \emph{explicit} error bound for the first time, which has a nontrivial application to Peng's robust central limit theorem.\\

We show two applications of Theorem \ref{maintheorem}. The first one is the derivation of the convergence rate for the robust central limit theorem, which is discussed in section \ref{sec-CLT}. Herein, the asymmetric independency of random vectors $\{(X_i, Y_i)\}_{i\geq 1}$ means one cannot simply apply the tower property of expectations as in the classical linear case. We overcome this difficulty by a mathematical induction argument in Theorem \ref{Theorem_CLT}.
Thanks to the explicit formula for the error bound $C$ in (\ref{explicit_formula}), we are able to obtain \emph{an explicit convergence rate of Berry-Esseen type}. To the best of our knowledge, this is the first result about Peng's robust central limit theorem with an explicit bound. The constant $C$ in (\ref{explicit_formula}) is not sharp, and it would be interesting to improve such an explicit bound in the future. The second application is obtaining a numerical approximation scheme for \textcolor[rgb]{1.00,0.00,0.00}{the Black-Scholes-Barenblatt (BSB) equation}, which is widely used to model volatility uncertainty (see \cite{ALP}, \cite{Lyons} and \cite{Vorbrink}). We first make a connection between the G-equation and the BSB equation, and show that the proposed approximation scheme is a natural generalization of the well known Cox-Ross-Rubinstein (CIR) binomial tree approximation to the case with model ambiguity (see section \ref{section: BSB_equ}).

Theorem \ref{maintheorem} may have potential applications to other problems in $G$-expectations. To name a few, it could be applied to derive the convergence rates for generalized robust central limit theorems as considered in \cite{Bayraktar}, \cite{Shiyufeng} and \cite{ChenZengjing}. One of the key steps is to construct appropriate monotone approximation schemes corresponding to the sequence of involved random variables. Another application of Theorem \ref{maintheorem} is to approximate G-expectations as in \cite{Dolinsky} \cite{Dolinsky2} and \cite{Yifan}, which needs the notion of G-Brownian motion developed in \cite{Peng2008(2)}. Finally, the convergence analysis of the monotone approximation schemes may also offer new insight for the numerical solutions of (backward) stochastic differential equations driven by G-Brownian motion (see \cite{Mingshang}, \cite{Mingshang2} and \cite{Yiqing}).\\

The rest of the paper is organized as follows. Sections 2 and 3 are devoted to the applications to the robust central limit theorem and the
Black-Scholes-Barenblatt(BSB) equation, respectively. Section 4 establishes the regularity properties of the viscosity solution and the approximate solution. Sections 5 and 6 are devoted to the monotone approximation scheme and its convergence rate. Section 7 then discusses some special cases.\\


\emph{Notation.} Let $\delta\in(0,1]$. The (semi)norms of
a function $g:\mathbb{R}^d\to\mathbb{R}$ are defined as
$$|g|_{0}:=\sup_{x\in \mathbb{R}^d}|g(x)|, \ \ \ [g]_0 :=\sup_{x\in \mathbb{R}^d}g(x)^- ,\ \ \ [g]_{\mathcal{C}^{\delta}}:=\sup_{\substack{ x,x'\in \mathbb{R}^d \\ x\neq x' }}\frac{|g(x)-g(x')|}{|x-x'|^{\delta}}.$$
Let $\mathcal{C}_{lb}(\mathbb{R}^d)$ be the space of lower bounded continuous functions $g$ on $\mathbb{R}^d$ such that $[g]_0<\infty$,
$\mathcal{C}^{\delta}_{lb}(\mathbb{R}^d)$ be the space of lower bounded continuous functions $g$ on $\mathbb{R}^d$ such that $[g]_0+[g]_{\mathcal{C}^{\delta}}<\infty,$  
and $\mathcal{C}_b^{2+\delta}(\mathbb{R}^d)$ be the space of bounded continuous functions $g$ on $\mathbb{R}^d$ such that $|D_x^ig|_0<\infty$ for $0\leq i\leq 2$ and $[D_x^2g]_{\mathcal{C}^{\delta}}<\infty$.

Similarly, for a function $f:Q_T\to
\mathbb{R}$, we introduce its (semi)norms
$$|f|_{0}:=\sup_{(t,x)\in Q_T}|f(t,x)|, \ \ \ [f]_0:=\sup_{(t,x)\in Q_T}f(t,x)^-,$$
$$[f]_{1,\delta}:=\sup_{\substack{ (t,x),(t',x)\in Q_T \\t\neq t'}}\frac{|f(t,x)-f(t',x)|}{|t-t'|^{\delta}},\ \ \
[f]_{2,\delta}:=\sup_{\substack{ (t,x),(t,x')\in Q_T \\ x\neq x'
}}\frac{|f(t,x)-f(t,x')|}{|x-x'|^{\delta}}.$$ Furthermore,
$[f]_{\mathcal{C}^{{\delta}/{2},\delta}}:=[f]_{1,\delta/2}+[f]_{2,\delta}$.
Let $\mathcal{C}_{lb}(Q_T)$ be the space of lower bounded continuous functions $f$ on $Q_T$ such that $[f]_0<\infty$, $\mathcal{C}_{lb}^{{\delta}/{2},\delta}(Q_T)$ be the space of lower bounded continuous
functions $f$ on $Q_T$ such that $[f]_0+[f]_{\mathcal{C}^{{\delta}/{2},\delta}}<\infty$, 
and $\mathcal{C}_b^{1+\frac{\delta}{2},2+{\delta}}(Q_T)$ be the space of bounded continuous functions $f$ such that $|\partial_t^iD_x^jf|_0<\infty$ for $0\leq i\leq 1$, $0\leq j\leq 2$, and $[\partial_t^{1}f]_{\mathcal{C}^{{\delta}/{2},\delta}}+[  D_x^{2}f]_{\mathcal{C}^{{\delta}/{2},\delta}}<\infty$. Basically, $\mathcal{C}_{lb}^{{\delta}/{2},\delta}(Q_T)$ is the space of viscosity solutions and  $\mathcal{C}_b^{1+\frac{\delta}{2},2+{\delta}}(Q_T)$ is the space of classical solutions in this paper.

Finally, for $S=\mathbb{R}^d$ or ${Q}_T$, we
denote by $\mathcal{C}_{lb}^{\infty}(S)$ be the spaces of lower bounded continuous functions on $S$
with bounded derivatives of any order.


\section{Application to robust central limit theorem}\label{sec-CLT}

In this section, we apply Theorem \ref{maintheorem} to derive the convergence rate (with an explicit bound of Berry-Esseen type) of the celebrated robust central limit theorem introduced in \cite{Peng2008}. For this, let $\{(X_i,Y_i)\}_{i\geq 1}$ be a sequence of $\rd\times\rd$-valued random vectors defined on $(\Omega,\mathcal{H},\sube)$ such that $(X_1,Y_1)=(X,Y)$, $(X_{i+1},Y_{i+1}) \stackrel{d}{=} (X_i,Y_i)$ and $(X_{i+1},Y_{i+1})$ is independent of $\{(X_1,Y_1),...,(X_i,Y_i)\}$ for each $i\in\mathbb{N}$. Furthermore, assume that $X$ and $Y$ satisfy Assumption \ref{assumption1}(ii). Then, Peng proved that the sequence $\{S_n\}_{n\geq 1}$ defined by
\begin{equation}\label{s_n_def}
S_n:=\sum_{i=1}^n(\frac{X_i}{\sqrt{n}}+\frac{Y_i}{n})
\end{equation}
converges in law to $(\xi+\zeta)$:
\begin{equation}\label{clt_peng}
\lim_{n\rightarrow\infty}\sube[\phi(S_n)]=\widetilde{\mathbb{E}}[\phi(\xi+\zeta)].
\end{equation}
for any continuous test function satisfying linear growth condition, \textcolor[rgb]{1.00,0.00,0.00}{where $(\xi,\zeta)$ follows G-distribution under another sublinear expectation $\widetilde{\mathbb{E}}$ possibly different from $\sube$.} See Theorem 5.1 in \cite{Peng2008} for its proof.

Following Peng's seminal work, a lot of efforts have been made to further obtain the various convergence rates of (\ref{clt_peng}) with additional model assumptions (see, for example, \cite{FPSS} \cite{Krylov4} and \cite{Song}). However, the existing literature on the convergence rates of (\ref{clt_peng}) assumes that either $X_i=0$ or $Y_i=0$ and, to be best of our knowledge, the convergence rate of (\ref{clt_peng}) for the general situation (i.e. $X_i\neq 0$ and $Y_i\neq 0$) and an explicit bound of Berry-Esseen type
are still lacking. Our aim is therefore to obtain a general result about the convergence rate of (\ref{clt_peng}) with an explicit bound using Theorems \ref{maintheorem}.

To illustrate how it works, we provide some
preliminary informal arguments to highlight the main ideas and build intuition. Consider $d=1$ for simplicity. If we replace the sublinear expectation $\sube$ with the linear expectation $\mathbb{E}$ and let $\{(X_i,Y_i)\}_{i\geq 1}$ be {a sequence of i.i.d.} copies of $(X,Y)$ such that
$\mathbb{E}[X]=0$, then the recursive approximation (\ref{scheme}) reduces to
\begin{equation*}
u^\Delta(n\Delta,x)=\mathbb{E}[\phi(x+\sum_{i=1}^n (\sqrt{\Delta}X_i+\Delta Y_i))].
\end{equation*}
On the other hand, the nonlinear function $G$ defined in (\ref{functionG}) reduces to $G(p,A)=\frac12 \mathbb{E}[|X|^2]A+\mathbb{E}[Y]p$, so $\mathbb{E}[|X|^2]$ and $\mathbb{E}[Y]$ turn out to be the coefficients of the linear equation
$$\partial_tu-\frac12\mathbb{E}[|X|^2]\partial_{xx}u-\mathbb{E}[Y]\partial_xu=0.$$
The Feynman-Kac formula then implies that
\begin{equation*}
u(t,x)={\mathbb{E}}[\phi(x+\sqrt{t}\xi+t\zeta)],
\end{equation*}
where $\xi\sim N(0, \mathbb{E}[|X|^2])$ and $\mathbb{\zeta}=\mathbb{E}[Y]$. Taking $\Delta=\frac1n$ and using Theorem \ref{maintheorem}, we obtain $$u^{\Delta}(1,0)=\mathbb{E}\left[\phi\left(\sum_{i=1}^n (\frac{X_i}{\sqrt{n}}+\frac{Y_i}{n})\right)\right]\rightarrow u(1,0)={\mathbb{E}}[\phi(\xi+\zeta)],$$
which is precisely the classical central limit theorem (for $\xi$) and law of large numbers (for $\zeta$)

\begin{theorem}\label{Theorem_CLT}
Let
$\{S_n\}_{n\geq 1}$ be given as in (\ref{s_n_def}), and suppose  Assumption \ref{assumption1} is satisfied.
Then, the following assertions hold.

(i) (Degenerate case) There exists a constant $C$ depending only on $T$, $C_{\phi}$, $\beta$, $M_X^{3}$ and $M_Y^2$ such that
\begin{equation}\label{convergence_rate}
\left|\sube[\phi(S_n)]-\widetilde{\mathbb{E}}[\phi(\xi+\zeta)]\right|\leq Cn^{-\frac{\beta}{6}}.
\end{equation}
Moreover, if the dimension $d=1$ and $T=1$, then the constant $C$ has an explicit formula given in (\ref{explicit_formula}).

(ii) (Non-degenerate case) Moreover, if the second moment of the random vector $X$ is non-degenerate, i.e. $$\underline{\sigma}^2:=-\sube[-|X|^2]>0,$$
and the initial data $\phi\in\mathcal{C}_b^1(\mathbb{R}^d)$, i.e. $\phi$ is bounded and Lipschitz continuous, then there exists a constant $\alpha\in(0,1)$ {depending on $d$, $\underline{\sigma}^2$ and $M_X^2$} and a constant $C$  {depending only on $T$, $C_{\phi}$, $\alpha$, $M_X^{2+\alpha}$ and $M_Y^2$ such that}
 \begin{equation}\label{convergence_rate_2}
\left|\sube[\phi(S_n)]-\widetilde{\mathbb{E}}[\phi(\xi+\zeta)]\right|\leq Cn^{-\max\{\frac{\alpha}{2},\frac{1}{6}\}}.
\end{equation}
 \end{theorem}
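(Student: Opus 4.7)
The plan is to identify the two sides of the inequalities in (\ref{convergence_rate}) and (\ref{convergence_rate_2}) as, respectively, the approximate solution $u^{\Delta}$ and the viscosity solution $u$ of the G-equation (\ref{PDE_1})-(\ref{cc}) with $T=1$, initial data $\phi$, and $G$ defined in terms of $(X,Y)$ by (\ref{functionG}), and then apply Theorem \ref{maintheorem} directly. Concretely, I would first note that by the representation (\ref{solution}), evaluated at $t=1$, $x=0$, we have $u(1,0) = \widetilde{\mathbb{E}}[\phi(\xi+\zeta)]$. The real work is to prove the companion identity
\[
u^{1/n}(1,0)=\hat{\mathbb{E}}[\phi(S_n)],
\]
where $u^{\Delta}$ is the recursive approximation (\ref{scheme}) with step $\Delta=1/n$. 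Once both identifications are in place, assertion (i) follows immediately from Theorem \ref{maintheorem}(ii) with $T=1$ (in the one-dimensional case with an explicit constant given by (\ref{explicit_formula})), and assertion (ii) from Theorem \ref{maintheorem}(iii), since $\Delta^{\beta/6}=n^{-\beta/6}$ and $\Delta^{\max\{\alpha/2,1/6\}}=n^{-\max\{\alpha/2,1/6\}}$.

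The identification $u^{1/n}(1,0)=\hat{\mathbb{E}}[\phi(S_n)]$ I would establish by induction on $n$, proving the more general claim
\[
u^{\Delta}(n\Delta,x)=\hat{\mathbb{E}}\Bigl[\phi\Bigl(x+\sum_{i=1}^{n}(\sqrt{\Delta}X_i+\Delta Y_i)\Bigr)\Bigr]
\]
for every $n\ge 1$ and $x\in\mathbb{R}^d$. The case $n=1$ is immediate from (\ref{scheme}) because $(X_1,Y_1)\stackrel{d}{=}(X,Y)$. For the inductive step, one uses the defining property of independence under $\hat{\mathbb{E}}$: since $(X_{n+1},Y_{n+1})$ is independent of the vector $(X_1,Y_1,\dots,X_n,Y_n)$, we may compute a joint sublinear expectation by conditioning on the earlier vector first, namely by setting
\[
\varphi(z):=\hat{\mathbb{E}}[\phi(z+\sqrt{\Delta}X_{n+1}+\Delta Y_{n+1})]=\hat{\mathbb{E}}[\phi(z+\sqrt{\Delta}X+\Delta Y)],
\]
(the last equality by identical distribution), and then recognising that $\hat{\mathbb{E}}[\varphi(z)]\big|_{z=x+\sum_{i=1}^{n}(\sqrt{\Delta}X_i+\Delta Y_i)}$ equals the full expectation $\hat{\mathbb{E}}[\phi(x+\sum_{i=1}^{n+1}(\sqrt{\Delta}X_i+\Delta Y_i))]$. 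Composing this with the inductive hypothesis and the recursive definition (\ref{scheme}) at time $(n+1)\Delta$ closes the induction.

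The main obstacle, as the authors themselves flag, is precisely this inductive step: in a genuinely sublinear expectation space one does not have an unconditional tower property, so the equality
\[
\hat{\mathbb{E}}\bigl[\hat{\mathbb{E}}[\phi(z+\sqrt{\Delta}X+\Delta Y)]\bigr|_{z=x+\sum_{i=1}^{n}(\sqrt{\Delta}X_i+\Delta Y_i)}\bigr]=\hat{\mathbb{E}}\Bigl[\phi\Bigl(x+\sum_{i=1}^{n+1}(\sqrt{\Delta}X_i+\Delta Y_i)\Bigr)\Bigr]
\]
is valid only because of the asymmetric independence assumption $(X_{n+1},Y_{n+1})\perp(X_1,Y_1,\dots,X_n,Y_n)$ combined with identical distribution; care must be taken to apply the independence in the correct direction (later increments independent of earlier ones, not the other way around) and to ensure that the inner function $\varphi$ has enough regularity (continuity and appropriate growth inherited from $\phi$) for the sublinear-expectation independence definition to apply. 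Once this identification is proved, both (\ref{convergence_rate}) and (\ref{convergence_rate_2}) are a direct consequence of Theorem \ref{maintheorem} evaluated at $(t,x)=(1,0)$ with $\Delta=1/n$, and in the one-dimensional case the explicit constant $C$ is inherited verbatim from (\ref{explicit_formula}).
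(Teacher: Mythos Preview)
Your overall strategy---identify $u(1,0)=\widetilde{\mathbb{E}}[\phi(\xi+\zeta)]$ via (\ref{solution}), prove the representation $u^{\Delta}(n\Delta,x)=\hat{\mathbb{E}}\bigl[\phi\bigl(x+\sum_{i=1}^{n}(\sqrt{\Delta}X_i+\Delta Y_i)\bigr)\bigr]$ by induction, then invoke Theorem~\ref{maintheorem} at $(t,x)=(1,0)$ with $\Delta=1/n$---is exactly the paper's approach. However, the inductive step as you sketch it does not close. From the recursion (\ref{scheme}) and the inductive hypothesis you obtain
\[
u^{\Delta}((n{+}1)\Delta,x)=\hat{\mathbb{E}}\bigl[F(x+\sqrt{\Delta}X+\Delta Y)\bigr],\qquad F(z):=\hat{\mathbb{E}}\Bigl[\phi\Bigl(z+\sum_{i=1}^{n}(\sqrt{\Delta}X_i+\Delta Y_i)\Bigr)\Bigr],
\]
whereas peeling off $(X_{n+1},Y_{n+1})$ from the target via independence gives
\[
\hat{\mathbb{E}}\Bigl[\phi\Bigl(x+\sum_{i=1}^{n+1}(\sqrt{\Delta}X_i+\Delta Y_i)\Bigr)\Bigr]=\hat{\mathbb{E}}\Bigl[\varphi\Bigl(x+\sum_{i=1}^{n}(\sqrt{\Delta}X_i+\Delta Y_i)\Bigr)\Bigr],\qquad \varphi(z):=\hat{\mathbb{E}}[\phi(z+\sqrt{\Delta}X+\Delta Y)].
\]
These two expressions integrate the ``new'' increment in opposite positions (outermost in the first, innermost in the second), and simply ``composing'' them does not produce an equality: you are tacitly commuting two sublinear iterated expectations, which is precisely the step that is unavailable in general. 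You rightly flag that the direction of independence matters, but you do not supply the bridge between these two displays.

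The paper fills this gap by proving, via a separate induction, the label-shifting identity
\[
\hat{\mathbb{E}}\Bigl[g\Bigl(\sum_{i=1}^{n}(\sqrt{\Delta}X_i+\Delta Y_i)\Bigr)\Bigr]=\hat{\mathbb{E}}\Bigl[g\Bigl(\sum_{i=2}^{n+1}(\sqrt{\Delta}X_i+\Delta Y_i)\Bigr)\Bigr]
\]
for all $g\in\mathcal{C}_{lb}(\mathbb{R}^d)$. Applying this inside $F$ replaces $\sum_{i=1}^{n}$ by $\sum_{i=2}^{n+1}$, after which the outer $(X,Y)=(X_1,Y_1)$ and the shifted inner sum combine, through the sequential independence of $(X_2,\dots,X_{n+1},Y_2,\dots,Y_{n+1})$ from $(X_1,Y_1)$, to yield $\sum_{i=1}^{n+1}$. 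The proof of this auxiliary identity (itself an induction using identical distribution together with independence of $(X_{n+2},Y_{n+2})$ from the earlier block) is where the substantive work of the inductive step lies, and it is the piece your proposal is missing.
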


\begin{proof} We claim that, for all $n\in\mathbb{N}$ such that $n\Delta\le T$ and $x\in\rd$,
\begin{equation}\label{clt_representation}
u^\Delta(n\Delta,x)=\sube[\phi(x+\sum_{i=1}^n (\sqrt{\Delta}X_i+\Delta Y_i))].
\end{equation}
If the representation formula (\ref{clt_representation}) holds, then by letting $\Delta=1/n$ and $x=0$, we obtain
$$u^{\Delta}(1,0)=\sube[\phi(S_n)].$$
On the other hand, the representation formula (\ref{solution}) implies that
\begin{equation*}
u(1,0)=\widetilde{\mathbb{E}}[\phi(\xi+\zeta)].
\end{equation*}
Hence, the assertions (i) and (ii) follow from Theorem \ref{maintheorem}.

We are left to show (\ref{clt_representation}). We prove by induction on $n$. Note that the case $n=1$ follows directly from (\ref{scheme}). Next, we claim that for all $n\in\mathbb{N}$ and $g\in\mathcal{C}_{lb}(\rd)$,
\begin{equation}\label{claim11}
\sube\left[g\left(\sum_{i=1}^n (\sqrt{\Delta}X_i+\Delta Y_i)\right)\right]=\sube\left[g\left(\sum_{i=2}^{n+1} (\sqrt{\Delta}X_i+\Delta Y_i)\right)\right],
\end{equation}
and suppose (\ref{clt_representation}) holds for $n\in\mathbb{N}$ such that $n\Delta\le T$. Then, if $(n+1)\Delta\le T$, we use (\ref{claim11}) to obtain
\begin{align*}
&\ u^\Delta((n+1)\Delta,x)\\
= &\
\sube[u^\Delta(n\Delta,x+\sqrt{\Delta}X+\Delta Y)] \\
= &\
\sube\left[\sube\left[\phi(x+\sqrt{\Delta}p+\Delta q+\sum_{i=1}^n (\sqrt{\Delta}X_i+\Delta Y_i))\right]_{(p,q)=(X,Y)}\right] \\
= &\
\sube\left[\sube\left[\phi(x+\sqrt{\Delta}p+\Delta q+\sum_{i=2}^{n+1} (\sqrt{\Delta}X_i+\Delta Y_i))\right]_{(p,q)=(X,Y)}\right] \\
= &\
\sube[\phi(x+\sum_{i=1}^{n+1} (\sqrt{\Delta}X_i+\Delta Y_i))].
\end{align*}
In other words, (\ref{clt_representation}) also holds for $n+1$.

Finally, to show (\ref{claim11}), we prove again by induction on $n$. The case $n=1$ follows from $(X_{2},Y_{2}) \stackrel{d}{=} (X_1,Y_1)$. Suppose  (\ref{claim11}) holds for $n\in\mathbb{N}$, then
\begin{align*}
&\sube\left[g\left(\sum_{i=1}^{n+1} (\sqrt{\Delta}X_i+\Delta Y_i)\right)\right]
=
\sube\left[g\left(\sum_{i=1}^n (\sqrt{\Delta}X_i+\Delta Y_i)+\sqrt{\Delta}X_{n+1}+\Delta Y_{n+1}\right)\right].
\end{align*}
Since $(X_{n+1},Y_{n+1})$ is independent of $\{(X_1,Y_1),...,(X_n,Y_n)\}$, The RHS of the above equality further equals to
\begin{align*}
&\
\sube\left[\sube\left[g\left(\sum_{i=1}^n (\sqrt{\Delta}x_i+\Delta y_i)+\sqrt{\Delta}X_{n+1}+\Delta Y_{n+1}\right)\right]_{(x_i,y_i)=(X_i,Y_i),i=1,...,n}\right]\\
= &\
\sube\left[\sube\left[g\left(\sum_{i=1}^n (\sqrt{\Delta}x_i+\Delta y_i)+\sqrt{\Delta}X_{n+2}+\Delta Y_{n+2}\right)\right]_{(x_i,y_i)=(X_i,Y_i),i=1,...,n}\right]\\
= &\
\sube\left[f\left(\sum_{i=1}^n (\sqrt{\Delta}X_i+\Delta Y_i)\right)\right],
\end{align*}
where \textcolor[rgb]{1.00,0.00,0.00}{$f(x):=\sube\left[g\left(x+\sqrt{\Delta}X_{n+2}+\Delta Y_{n+2}\right)\right]$} and the first equality follows from $(X_{n+2},Y_{n+2}) \stackrel{d}{=} (X_{n+1},Y_{n+1})$. In turn, since (\ref{claim11}) holds for $n$, we further have
\begin{align*}
&\sube\left[g\left(\sum_{i=1}^{n+1} (\sqrt{\Delta}X_i+\Delta Y_i)\right)\right]\\
= &\
\sube\left[f\left(\sum_{i=2}^{n+1} (\sqrt{\Delta}X_i+\Delta Y_i)\right)\right]\\
= &\
\sube\left[\sube\left[g\left(\sum_{i=2}^{n+1} (\sqrt{\Delta}x_i+\Delta y_i)+\sqrt{\Delta}X_{n+2}+\Delta Y_{n+2}\right)\right]_{(x_i,y_i)=(X_i,Y_i),i=2,...,n+1}\right]\\
= &\
\sube\left[g\left(\sum_{i=2}^{n+2} (\sqrt{\Delta}X_i+\Delta Y_i)\right)\right],
\end{align*}
which completes the proof.
\end{proof}

\section{Application to Black-Scholes-Barenblatt equation}\label{section: BSB_equ}
In this section, we apply the approximation scheme (\ref{scheme}) to the  Black-Scholes-Barenblatt equation (see \cite{ALP}, \cite{Lyons} and \cite{Vorbrink} for the dimension $d=1$), which often arises from option pricing models with volatility uncertainty, namely
\begin{equation}\label{BSB}
\partial_tu+rx\partial_xu+\frac12 \bar{\sigma}^2x^2\partial_{xx}u^+-\frac12 \underline{\sigma}^2x^2\partial_{xx}u^--ru=0, \ \ \ u|_{t=T}=\Phi,
\end{equation}
where $r$ is the constant riskless interest rate, $\bar{\sigma}\ge\underline{\sigma}>0$ are two constants representing upper and lower bounds on the volatility of underlying price, and $\Phi:\mathbb{R}\to\mathbb{R}$ represents some European contingent claim payoff function. Note that when $\bar{\sigma}=\underline{\sigma}$, the equation reduces to the classical Black-Scholes equation.

To apply the approximation scheme (\ref{scheme}), some transformations are needed firstly: let $v(t,x):=u(T-t,e^x)e^{rt}$, then (\ref{BSB}) becomes
\begin{equation}\label{BSB1}
\partial_tv-\sup_{\sigma\in[\underline{\sigma},\bar{\sigma}]}\left\{(r-\frac12\sigma^2)\partial_xv+\frac12 \sigma^2\partial_{xx}v\right\}=0, \ \ \ v|_{t=0}=\Phi(e^x).
\end{equation}
Comparing the equation (\ref{BSB1}) to the G-equation (\ref{functionG}) and (\ref{PDE_1}), we only need to construct a sublinear expectation $\sube$ and find random variables $(X,Y)$ with $X$ having no mean uncertainty such that
\begin{equation}\label{Gexample}
G(p,A)=\sube[pY+\frac12AX^2]=\sup_{\sigma\in[\underline{\sigma},\bar{\sigma}]}\left\{(r-\frac12\sigma^2)p+\frac12 \sigma^2A\right\}.
\end{equation}
To this end, {suppose we are given a measurable space $(\Omega,\mathcal{F})$ which supports a class of probability measures $\mathbb{P}^\sigma$ for $\sigma\in[\underline{\sigma},\bar{\sigma}]$}. We can then define a random variable $X$ such that $\mathbb{P}^\sigma(X=\sigma)=\frac12$ and $\mathbb{P}^\sigma(X=-\sigma)=\frac12$ for any $\sigma\in[\underline{\sigma},\bar{\sigma}]$, and a random variable $Y:=r-\frac12X^2$. Consequently, a sublinear expectation space $(\Omega, \mathcal{H},\sube)$ can be defined such that $X,Y\in\mathcal{H}$ and
$$\sube\left[\xi\right]=\sup_{\sigma\in[\underline{\sigma},\bar{\sigma}]}\mathbb{E}^{\mathbb{P}^\sigma}[\xi] \ \ \text{for} \ \ \xi\in\mathcal{H}.$$
 It is clear that $\sube[X]=\sube[-X]=0$ and (\ref{Gexample}) holds. The approximation scheme (\ref{scheme}) then has a simple form:
\begin{align*}
v^\Delta(t,x) = &\ \sube[v^\Delta(t-\Delta,x+\sqrt{\Delta}X+\Delta Y)] \\
= &\
\sup_{\sigma\in[\underline{\sigma},\bar{\sigma}]}\mathbb{E}^{\mathbb{P}^\sigma}[v^\Delta(t-\Delta,x+\sqrt{\Delta}X+\Delta(r-\frac12X^2))] \\
= &\
\sup_{\sigma\in[\underline{\sigma},\bar{\sigma}]}\left[\frac12v^\Delta\left(t-\Delta,x+(r-\frac12\sigma^2)\Delta+\sigma\sqrt{\Delta}\right)+\frac12v^\Delta\left(t-\Delta,x+(r-\frac12\sigma^2)\Delta-\sigma\sqrt{\Delta}\right)\right]
\end{align*}
for $\Delta\le t\le T$ and $v^\Delta(t,x)=\Phi(e^x)$ for $t<\Delta$.

\begin{remark}
When $\bar{\sigma}=\underline{\sigma}=\sigma>0$, the above approximation scheme  reduces to classical Cox, Ross and Rubinstein (CRR) binomial tree approximation for $X_t=\ln(S_t)$, with $S_t$ following the geometric Brownian motion $dS_t=rS_tdt+\sigma S_tdW_t$. Indeed, by It\^{o}'s lemma, we have $dX_t=(r-\frac12\sigma^2)dt+\sigma dW_t$. The CRR binomial tree approximation for $X_t$ is then given as follows (we only display one step binomial tree approximation for simplicity):

\begin{center}
\begin{tikzpicture}[>=stealth]
    \matrix (tree) [
      matrix of nodes,
      minimum size=1cm,
      column sep=2.5cm,
      row sep=0.3cm,
    ]
    {
          & $X_0+(r-\frac12\sigma^2)\Delta t+\sigma\sqrt{\Delta t}$    \\
      $X_0$ &    \\
          & $X_0+(r-\frac12\sigma^2)\Delta t-\sigma\sqrt{\Delta t}$    \\
              };
    \draw[->] (tree-2-1) -- (tree-1-2) node [midway,above] {$\frac12$};
    \draw[->] (tree-2-1) -- (tree-3-2) node [midway,below] {$\frac12$};
\end{tikzpicture}
\end{center}
The approximation scheme for the approximation of $v(t,x)$ then reduces to the CRR binomial tree approximation
$$v^{\Delta}(t,x)=\frac12v^\Delta\left(t-\Delta,x+(r-\frac12\sigma^2)\Delta+\sigma\sqrt{\Delta}\right)+\frac12v^\Delta\left(t-\Delta,x+(r-\frac12\sigma^2)\Delta-\sigma\sqrt{\Delta}\right).$$
\end{remark}

Since Assumption \ref{assumption1}(ii) clearly holds, if the composition function $\phi(x):=\Phi(e^x)$ satisfies Assumption \ref{assumption1}(i), Theorem \ref{maintheorem} then implies that $v^{\Delta}$ converges to $v$ locally uniformly. Notice that with our construction of the sublinear expectation space $(\Omega,\mathcal{H},\hat{\mathbb{E}})$ and our choice of the random variable $X$, it also holds that
\begin{equation}\label{extra_assumption}
\hat{\mathbb{E}}[X]=\hat{\mathbb{E}}[-X]=\hat{\mathbb{E}}[X^{3}]=\hat{\mathbb{E}}[-X^{3}]=0,\ \ \ M_X^{4}=\hat{\mathbb{E}}[|X|^{4}]<\infty.
\end{equation}
Thanks to the above properties, we obtain a better convergence rate than {that} in Theorem \ref{maintheorem}{(iii)}.

\begin{proposition}\label{BSB_theorem} Let $(X,Y)$ be the random variables constructed as above on the sublinear expectation space $(\Omega,\mathcal{H},\sube)$. Then, for any test function $\phi(x):=\Phi(e^{x})\in\mathcal{C}_b^1(\mathbb{R})$, there exists a constant $\alpha\in(0,1)$ {depending on $\underline{\sigma}^2$ and $M_X^2$} such that $v\in\mathcal{C}_b^{1+\frac{\alpha}{2},2+\alpha}({Q}_{T})$. Moreover, for $\Delta\in(0,1)$, {there exists a constant $C$ depending only on $T$, $C_{\phi}$, $\alpha$, and $M_X^{2+\alpha}$ such that}
\begin{equation}\label{BSB_convergence}
|v-v^{\Delta}|\leq C\Delta^{\max\{\frac{\alpha}{2},\frac14\}}\ \ \ in\ \bar{Q}_T.
\end{equation}
\end{proposition}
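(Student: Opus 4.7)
The regularity assertion $v \in \mathcal{C}_b^{1+\alpha/2, 2+\alpha}(Q_T)$ follows directly from Theorem \ref{maintheorem}(iii) applied to equation (\ref{BSB1}): the random variable $X$ constructed here has $\sube[X] = \sube[-X] = 0$ and non-degenerate second moment $\underline{\sigma}^2 > 0$, all its higher moments are finite since $|X| \leq \bar{\sigma}$, and the initial data $\phi(x) = \Phi(e^x)$ lies in $\mathcal{C}_b^1(\mathbb{R})$ by hypothesis, so Theorem \ref{maintheorem}(iii) supplies both the exponent $\alpha$ and the claimed regularity.

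For the improved rate $\Delta^{\max\{\alpha/2, 1/4\}}$, the plan is to retrace the two-track argument of Theorem \ref{maintheorem}(iii). The $\Delta^{\alpha/2}$ track, obtained by interchanging the scheme and the equation using $v \in \mathcal{C}^{2+\alpha}$, transfers essentially verbatim and uses only $M_X^{2+\alpha}$. The other track must upgrade the $\Delta^{1/6}$ bound of Theorem \ref{maintheorem}(ii) to $\Delta^{1/4}$, and this is where the additional moment cancellations (\ref{extra_assumption}) specific to the BSB construction are exploited.

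Concretely, for a smooth test function $\psi$, Taylor-expand $\psi(x + h)$ with $h = \sqrt{\Delta}X + \Delta Y$. The generic bound in Theorem \ref{maintheorem}(ii) loses a factor $\Delta^{1/2}$ per step to the cubic term $h^3\psi'''/6$. Under the BSB construction, however, each $\mathbb{P}^\sigma$ makes $X$ symmetric and $Y = r - X^2/2$ deterministic given $\sigma$, so $\mathbb{E}^{\mathbb{P}^\sigma}[X] = \mathbb{E}^{\mathbb{P}^\sigma}[X^3] = \mathbb{E}^{\mathbb{P}^\sigma}[XY] = 0$. Consequently,
$$\mathbb{E}^{\mathbb{P}^\sigma}[h^3] = 3\Delta^2 \sigma^2\bigl(r - \tfrac12\sigma^2\bigr) + \Delta^3 \bigl(r - \tfrac12\sigma^2\bigr)^3 = O(\Delta^2)$$
uniformly in $\sigma \in [\underline{\sigma}, \bar{\sigma}]$. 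Together with the quartic contribution $|\sube[h^4 \psi^{(4)}/24]| \leq C\Delta^2 M_X^4 |\psi^{(4)}|_0$ (and $M_X^4 = \bar{\sigma}^4 = (M_X^{2+\alpha})^{4/(2+\alpha)}$), the per-step truncation becomes $O\bigl(\Delta^2(|\psi'''|_0 + |\psi^{(4)}|_0)\bigr)$ instead of the $O(\Delta^{3/2}|\psi'''|_0)$ of the generic case. Feeding this into the mollification procedure of Theorem \ref{maintheorem}(ii), with $\phi$ Lipschitz ($\beta=1$) mollified at scale $\epsilon$ producing $v_\epsilon$ satisfying $|D_x^k v_\epsilon|_0 \leq C\epsilon^{1-k}$ for $k \geq 1$, the error decomposition becomes
$$|v - v^\Delta|_0 \leq C\bigl(\epsilon + T\Delta\,\epsilon^{-3}\bigr),$$
which is optimized at $\epsilon \sim \Delta^{1/4}$ and yields the $\Delta^{1/4}$ bound. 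Taking the maximum of the two tracks gives $\Delta^{\max\{\alpha/2, 1/4\}}$.

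The main obstacle will be the careful sublinear-expectation bookkeeping for the cubic cancellation: since $\sube$ is only sub-additive, one cannot naively delete mean-zero cubic terms from inside a larger functional. Instead, one must work pathwise under each $\mathbb{P}^\sigma$, estimate the cubic and quartic contributions uniformly in $\sigma$, and then pass to the supremum via $\sube[\,\cdot\,] = \sup_{\sigma \in [\underline{\sigma}, \bar{\sigma}]} \mathbb{E}^{\mathbb{P}^\sigma}[\,\cdot\,]$; the construction $\mathbb{P}^\sigma(X = \pm\sigma)=\tfrac12$ is essential, since it ensures the mean-uncertainty-free property for every odd power of $X$ under each $\mathbb{P}^\sigma$ separately. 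Once this improved per-step consistency is in hand, the rest of the mollification balance and the interchange argument proceed exactly as in Theorem \ref{maintheorem}(iii).
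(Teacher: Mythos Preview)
Your proposal is correct and follows essentially the same two-track strategy as the paper: the $\Delta^{\alpha/2}$ bound comes directly from Theorem \ref{maintheorem}(iii), and the $\Delta^{1/4}$ bound comes from a refined consistency estimate $\mathcal{E}(\Delta,\psi)\le C\Delta(|D^4\psi|_0+|D^3\psi|_0+|D^2\psi|_0)$ exploiting the vanishing odd moments, which after mollification yields the balance $\varepsilon + \Delta\varepsilon^{-3}$ optimized at $\varepsilon=\Delta^{1/4}$.

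The one point of divergence is in how you handle the sublinear-expectation bookkeeping. You propose to drop to each $\mathbb{P}^\sigma$, use that $X$ is symmetric and $Y$ deterministic there, and only then take the supremum over $\sigma$. The paper instead stays at the level of $\sube$ throughout: it splits $\psi(x+\sqrt{\Delta}X+\Delta Y)-\psi(x)$ into three pieces (the $X$-only part, the increment in $\Delta Y$, and the cross term $[D\psi(x+\sqrt{\Delta}X)-D\psi(x)]\Delta Y$), and removes the mean-zero terms $\sqrt{\Delta}D\psi(x)X$, $\Delta^{3/2}D^3\psi(x)X^3$, and $\Delta^{3/2}D^2\psi(x)XY$ directly via the identity $\sube[A+B]=\sube[A]$ whenever $\sube[B]=\sube[-B]=0$. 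So your worry that ``one cannot naively delete mean-zero cubic terms from inside a larger functional'' is unfounded: the no-mean-uncertainty conditions in (\ref{extra_assumption}) are precisely what make this deletion legitimate under $\sube$. Your $\mathbb{P}^\sigma$-by-$\mathbb{P}^\sigma$ route works too and is perhaps more transparent for this particular construction, but the paper's approach has the advantage of using only the abstract moment conditions (\ref{extra_assumption}), and hence applies to any $(X,Y)$ satisfying them, not just the BSB binomial model.
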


The proof follows along a similar argument and procedure used in Theorem \ref{maintheorem} with a refinement of the consistency error estimates in Proposition \ref{operator}, and is therefore postponed to the Appendix.

\begin{remark}
In the general degenerate situation (i.e. without the assumption that $\underline{\sigma}>0$), the convergence rate in (\ref{BSB_convergence}) becomes $\frac14$.
Note that (\ref{extra_assumption}) is the only condition on $X$ needed to obtain the convergence rate $\frac14$.
The condition (\ref{extra_assumption}) is also imposed in \cite{Krylov4} and the same convergence rate is obtained in Theorem 4.1 therein. However, there is no $Y$ component {in} \cite{Krylov4}, so {the result therein} is only a special case of our situation.
\end{remark}

\section{Regularity estimates} \label{sec-estimate}

We establish the space and time regularity properties of both $u$ and $u^{\Delta}$, which are crucial for proving the convergence of $u^{\Delta}$ to $u$ and determining its convergence rate.
In particular, the regularity of $u$ and $u^{\Delta}$ will play a vital role in mollification procedures
(see (\ref{lowerbound_2}) in section \ref{upbd} and (\ref{upperbound_2}) in section \ref{lowbd}).

\begin{lemma}\label{regularity0}
Suppose that Assumption \ref{assumption1}(ii) is satisfied. Then,

(i) $\widetilde{\mathbb{E}}[|\xi|^2]=\sube[|X|^2]=M_X^2$,

(ii) $\widetilde{\mathbb{E}}[|\zeta|^p]\le\sube[|Y|]^p=(M_Y^1)^p$ for $p>0$,

\noindent
where $(\xi,\zeta)$ is a pair of G-distributed random vectors characterized by (\ref{functionG}) with the associated sublinear expectation $\widetilde{\mathbb{E}}$.
\end{lemma}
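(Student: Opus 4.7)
The plan is to exploit the two characterizations of the pair $(\xi,\zeta)$: (a) the defining identity
$\widetilde{\mathbb{E}}[\langle p,\zeta\rangle+\tfrac12\langle A\xi,\xi\rangle]=G(p,A)=\sube[\langle p,Y\rangle+\tfrac12\langle AX,X\rangle]$
valid for every $(p,A)\in\rd\times\mathbb{S}(d)$, and (b) the maximal–distribution representation $\widetilde{\mathbb{E}}[\psi(\zeta)]=\max_{q\in Q}\psi(q)$. Part (i) will come from specializing (a); part (ii) will combine (a) with (b) and a duality argument.

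For (i), I would plug in $p=0$ and $A=I_d$ into the defining identity. The left-hand side becomes $\widetilde{\mathbb{E}}[\tfrac12|\xi|^2]$ and the right-hand side becomes $\sube[\tfrac12|X|^2]$. Positive homogeneity of both $\widetilde{\mathbb{E}}$ and $\sube$ (one of the four axioms of a sublinear expectation) lets me pull out the $\tfrac12$, giving $\widetilde{\mathbb{E}}[|\xi|^2]=\sube[|X|^2]=M_X^2$. This is essentially a one-line computation.

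For (ii), the boundedness of $Q$ is the key. I would first specialise (a) to $A=0$ to get $\widetilde{\mathbb{E}}[\langle p,\zeta\rangle]=\sube[\langle p,Y\rangle]$ for every $p\in\rd$. Combined with the maximal–distribution formula applied to the continuous linear functional $q\mapsto\langle p,q\rangle$, this gives
\[
\max_{q\in Q}\langle p,q\rangle=\sube[\langle p,Y\rangle]\le|p|\,\sube[|Y|]=|p|M_Y^1,
\]
where the inequality uses monotonicity and sub-additivity (or equivalently Cauchy–Schwarz inside $\sube$). Choosing $p=q$ for an arbitrary $q\in Q$ gives $|q|^2\le|q|M_Y^1$, hence $|q|\le M_Y^1$ for every $q\in Q$. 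Applying the maximal–distribution representation once more to $\psi(q)=|q|^p$ (continuous on the bounded set $Q$) yields $\widetilde{\mathbb{E}}[|\zeta|^p]=\max_{q\in Q}|q|^p\le (M_Y^1)^p$.

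I do not anticipate a real obstacle here; the only subtlety is that the maximal-distribution identity as stated in (\ref{maximal}) is formulated for functions of linear growth, whereas for $p>1$ the test function $|q|^p$ is of polynomial growth. This is harmless because $Q$ is bounded, so one may truncate $|q|^p$ outside a ball containing $Q$ without changing either side of the equality, or alternatively deduce the pointwise bound $|q|\le M_Y^1$ first (as above) and then evaluate $|\zeta|^p$ on the compact set $Q$ directly. Modulo this minor point, both assertions follow essentially immediately from the defining identities plus the sublinear-expectation axioms.
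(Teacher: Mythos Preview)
Your proposal is correct and follows essentially the same route as the paper: specialize the identity $G(p,A)=\widetilde{\mathbb{E}}[\langle p,\zeta\rangle+\tfrac12\langle A\xi,\xi\rangle]=\sube[\langle p,Y\rangle+\tfrac12\langle AX,X\rangle]$ to suitable $(p,A)$, and for (ii) combine this with the maximal-distribution representation $\widetilde{\mathbb{E}}[\psi(\zeta)]=\max_{q\in Q}\psi(q)$ to bound $|q|$ uniformly on $Q$. The only (minor) difference is in how the bound on $|q|$ is obtained: you use $\sube[\langle q,Y\rangle]\le|q|\,\sube[|Y|]$ directly to get $|q|^2\le|q|M_Y^1$, whereas the paper passes through the slightly longer chain $\sube[\langle q,Y\rangle]\le\sube[\max_{q'\in Q}\langle q',Y\rangle]=\sube[\sube[\langle y,Y\rangle]|_{y=Y}]\le\sube[|Y|]^2$ to reach $|q|^2\le(M_Y^1)^2$; your variant is a bit more elementary and avoids the nested-expectation step.
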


\begin{proof}
Assertion (i) is obvious by combining (\ref{functionG}) and (\ref{Gdistributed}), and letting $p=0$, $A=2I_d$. By (\ref{maximal}), we further have
$$\sube[\langle p,Y\rangle]=G(p,0)=\widetilde{\mathbb{E}}[\langle p,\zeta\rangle]=\max_{q\in Q}\langle p,q\rangle$$
for any $p\in\mathbb{R}^d$. Then, for any $q\in Q$,
\begin{align*}
|q|^2 \le &\ \max_{q'\in Q}\langle q, q' \rangle=\sube[\langle q, Y\rangle]\le\sube[\max_{q\in Q}\langle q, Y\rangle] \\
= &\
\sube\left[\sube[\langle y, Y\rangle]|_{y=Y}\right]\le\sube\left[\sube[|y||Y|]\left|_{y=Y}\right.\right]=\sube\left[|Y|\sube[|Y|]\right]=\sube\left[|Y|\right]^2.
\end{align*}
Thus, we obtain from (\ref{maximal}) again that $\widetilde{\mathbb{E}}[|\zeta|^p]=\max_{q\in Q}|q|^p\le\sube[|Y|]^p$.
\end{proof}

\begin{lemma}\label{regularity}
Suppose that Assumption \ref{assumption1} is satisfied. Then, for any $t,s\in[0,T]$ and $x,y\in\rd$,

(i) $|u(t,x)-u(t,y)|\le C_\phi |x-y|^\beta$.

(ii) $|u(s,x)-u(t,x)|\le C_\phi K_0|s-t|^{\beta/2}$,
where the constant $K_0$ is defined as
\begin{equation}\label{constant_K_0}
K_0:=e^{\frac{\beta T}{2}}[(M_X^2)^{\frac{\beta}{2}}+(M_Y^2)^{\frac{\beta}{2}}].
\end{equation}
\end{lemma}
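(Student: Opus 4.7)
My starting point would be the Feynman--Kac-type representation~\eqref{solution}, $u(t,x)=\widetilde{\mathbb{E}}[\phi(x+\sqrt{t}\,\xi+t\zeta)]$, which reduces the regularity of $u$ to moment estimates for the G-distributed pair $(\xi,\zeta)$. For part~(i), I would fix $t$ and use the subadditivity and monotonicity of $\widetilde{\mathbb{E}}$ together with Assumption~\ref{assumption1}(i) to write
\[
|u(t,x)-u(t,y)|\le \widetilde{\mathbb{E}}\!\left[\,\bigl|\phi(x+\sqrt{t}\xi+t\zeta)-\phi(y+\sqrt{t}\xi+t\zeta)\bigr|\,\right]\le C_\phi|x-y|^\beta,
\]
since $\xi$ and $\zeta$ cancel inside the $\beta$-H\"older increment. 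This gives (i) immediately.

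For part~(ii), assume without loss of generality that $s<t$. By the same subadditivity and H\"older argument,
\[
|u(s,x)-u(t,x)|\le C_\phi\,\widetilde{\mathbb{E}}\!\left[\,\bigl|(\sqrt{t}-\sqrt{s})\xi+(t-s)\zeta\bigr|^\beta\,\right].
\]
Since $\beta\in(0,1]$, the elementary inequality $(a+b)^\beta\le a^\beta+b^\beta$ for $a,b\ge0$ splits this into a $\xi$-term and a $\zeta$-term. The time factors are controlled by $|\sqrt{t}-\sqrt{s}|^\beta\le |t-s|^{\beta/2}$ and $|t-s|^\beta\le T^{\beta/2}|t-s|^{\beta/2}$, so it remains to bound $\widetilde{\mathbb{E}}[|\xi|^\beta]$ and $\widetilde{\mathbb{E}}[|\zeta|^\beta]$.

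For these two moments I would combine Lemma~\ref{regularity0} with a Jensen-type inequality for the sublinear expectation applied to the concave, non-decreasing map $r\mapsto r^{\beta/2}$ on $[0,\infty)$ (which is valid by a standard supergradient argument using the four axioms of $\widetilde{\mathbb{E}}$): this gives $\widetilde{\mathbb{E}}[|\xi|^\beta]\le(\widetilde{\mathbb{E}}[|\xi|^2])^{\beta/2}=(M_X^2)^{\beta/2}$ and, via Lemma~\ref{regularity0}(ii) with $p=2$, $\widetilde{\mathbb{E}}[|\zeta|^\beta]\le(\widetilde{\mathbb{E}}[|\zeta|^2])^{\beta/2}\le(M_Y^2)^{\beta/2}$. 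Assembling everything,
\[
|u(s,x)-u(t,x)|\le C_\phi\bigl[(M_X^2)^{\beta/2}+T^{\beta/2}(M_Y^2)^{\beta/2}\bigr]|t-s|^{\beta/2},
\]
and using $T^{\beta/2}\le e^{\beta T/2}$ (valid for $T\ge1$) yields the stated constant $K_0=e^{\beta T/2}[(M_X^2)^{\beta/2}+(M_Y^2)^{\beta/2}]$.

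\textbf{Main obstacle.} The only subtle point is the Jensen-type estimate $\widetilde{\mathbb{E}}[|\xi|^\beta]\le(\widetilde{\mathbb{E}}[|\xi|^2])^{\beta/2}$, since classical Jensen for concave functions reverses under $\sup$ of linear expectations. One has to invoke concavity together with monotonicity so that the linearization $r^{\beta/2}\le c^{\beta/2}+\tfrac{\beta}{2}c^{\beta/2-1}(r-c)$, evaluated at $c=\widetilde{\mathbb{E}}[|\xi|^2]$, combines with the constant-preserving and positive homogeneity axioms to give the desired inequality. Everything else is bookkeeping.
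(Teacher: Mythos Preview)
Your argument is correct and reaches the same constant $K_0$, but it takes a different route from the paper for part~(ii). The paper uses the semigroup (dynamic programming) property
\[
u(s,x)=\widetilde{\mathbb{E}}\bigl[u(t,x+\sqrt{s-t}\,\xi+(s-t)\zeta)\bigr],\qquad t\le s,
\]
and then applies part~(i) inside the expectation; this produces the factors $\sqrt{s-t}$ and $s-t$ directly. You instead work from the representation~\eqref{solution} at both times and bound $|\sqrt{t}-\sqrt{s}|^\beta$ by $|t-s|^{\beta/2}$ via the elementary inequality $|\sqrt{t}-\sqrt{s}|\le\sqrt{|t-s|}$. Your approach is slightly more elementary in that it avoids invoking the semigroup identity~\eqref{dpp}; the paper's approach has the advantage of extending verbatim to situations where only a flow property, rather than an explicit terminal formula, is available (as in the analogous Lemma~\ref{regularity1} for $u^\Delta$). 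Both arguments rely on the same Jensen-type bound $\widetilde{\mathbb{E}}[|\xi|^\beta]\le(\widetilde{\mathbb{E}}[|\xi|^2])^{\beta/2}$, which the paper also states without proof; your supergradient justification is exactly right.

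One small slip: Lemma~\ref{regularity0}(ii) with $p=2$ gives $\widetilde{\mathbb{E}}[|\zeta|^2]\le (M_Y^1)^2$, not $M_Y^2$ directly. Your stated chain $\widetilde{\mathbb{E}}[|\zeta|^\beta]\le(\widetilde{\mathbb{E}}[|\zeta|^2])^{\beta/2}\le (M_Y^2)^{\beta/2}$ still holds because $(M_Y^1)^2\le M_Y^2$ (again by concave Jensen under $\sube$), or more simply you can apply Lemma~\ref{regularity0}(ii) with $p=\beta$ to get $\widetilde{\mathbb{E}}[|\zeta|^\beta]\le (M_Y^1)^\beta\le (M_Y^2)^{\beta/2}$, which is what the paper does.
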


\begin{proof}
Assertion (i) is a direct consequence of the representation formula (\ref{solution}), the sub-additivity of $\widetilde{\mathbb{E}}$ and the H\"older continuity of $\phi$.

To prove (ii), we may assume $t\le s$. Note that the semigroup property of $u$ implies that
\begin{equation}\label{dpp}
u(s,x)=\widetilde{\mathbb{E}}[u(t,x+\sqrt{s-t}\xi+(s-t)\zeta)].
\end{equation}
In turn, the sub-additivity of $\widetilde{\mathbb{E}}$ and (i) yield
\begin{align*}
 |u(s,x)-u(t,x)| \le
 &\
 \widetilde{\mathbb{E}}[|u(t,x+\sqrt{s-t}\xi+(s-t)\zeta)-u(t,x)|] \\
 \le &\
 \widetilde{\mathbb{E}}[|u(t,x+\sqrt{s-t}\xi+(s-t)\zeta)-u(t,x+\sqrt{s-t}\xi)|] \\
 &\ +
 \widetilde{\mathbb{E}}[|u(t,x+\sqrt{s-t}\xi)-u(t,x)|] \\
 \le &\
 \widetilde{\mathbb{E}}[C_\phi |(s-t)\zeta|^\beta]+\widetilde{\mathbb{E}}[C_\phi |\sqrt{s-t}\xi|^\beta] \\
 = &\ C_\phi(\widetilde{\mathbb{E}}[|\xi|^\beta]+|s-t|^{\beta/2}\widetilde{\mathbb{E}}[|\zeta|^\beta])|s-t|^{\beta/2} \\
 \le &\
 C_\phi((M_X^2)^{\beta/2}+|s-t|^{\beta/2}(M_Y^1)^\beta)|s-t|^{\beta/2}.
\end{align*}
where we used Lemma \ref{regularity0} and the fact that $\widetilde{\mathbb{E}}[|\xi|^\beta]\le \widetilde{\mathbb{E}}[|\xi|^2]^{\beta/2}$ in the last inequality. The conclusion then follows from the inequalities
$$(M_X^2)^{\beta/2}+|s-t|^{\beta/2}(M_Y^1)^\beta\leq (M_X^2)^{\beta/2}+T^{\beta/2}(M_Y^2)^{{\beta}/{2}}\leq K_0.$$
\end{proof}

\begin{lemma}\label{regularity1}
Suppose that Assumption \ref{assumption1} is satisfied. Then, for any $\Delta\in(0,1)$, $t,s\in[0,T]$ and $x,y\in\rd$,

(i) $|u^\Delta(t,x)-u^\Delta(t,y)|\le C_\phi |x-y|^\beta$.

(ii) $|u^\Delta(s,x)-u^\Delta(t,x)|\le \sqrt{3}C_\phi K_0(|s-t|^{\beta/2}+\Delta^{\beta/2})$,
where the constant $K_0$ is given in (\ref{constant_K_0}).
\end{lemma}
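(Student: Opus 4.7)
My plan mirrors the proof of Lemma \ref{regularity} for the continuous-time solution $u$, but replaces the semigroup $u(s,x)=\widetilde{\mathbb{E}}[u(t,x+\sqrt{s-t}\xi+(s-t)\zeta)]$ by an iterated version of the one-step recursion (\ref{scheme}), using the four axioms of $\sube$ (essentially sub-additivity, monotonicity, positive homogeneity and constant-preservation) at every step.

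For (i), I will induct on $n\in\mathbb{N}_0$, where $t\in[n\Delta,(n+1)\Delta)$. The base case $n=0$ is Assumption \ref{assumption1}(i), since $u^\Delta(t,\cdot)=\phi$ on $[0,\Delta)$. For the inductive step, applying (\ref{scheme}) at the points $x$ and $y$ and using sub-additivity together with the inductive hypothesis yields
\[
|u^\Delta(n\Delta,x)-u^\Delta(n\Delta,y)|\le\sube\bigl[|u^\Delta((n-1)\Delta,x+\sqrt{\Delta}X+\Delta Y)-u^\Delta((n-1)\Delta,y+\sqrt{\Delta}X+\Delta Y)|\bigr]\le C_\phi|x-y|^\beta,
\]
and (i) follows on all of $[0,T]\times\rd$ via the piecewise-constant identity (\ref{interp}).

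For (ii), I may assume $s\ge t$, and write $s\in[m\Delta,(m+1)\Delta)$ and $t\in[n\Delta,(n+1)\Delta)$ with $k:=m-n\ge 0$. By (\ref{interp}) the problem reduces to bounding $|u^\Delta(m\Delta,x)-u^\Delta(n\Delta,x)|$. Iterating (\ref{scheme}) and using the independence of the copies $\{(X_i,Y_i)\}_{i=1}^{k}$ of $(X,Y)$ exactly as in the inductive derivation of (\ref{clt_representation}), I obtain the discrete semigroup identity
\[
u^\Delta(m\Delta,x)=\sube\Bigl[u^\Delta\Bigl(n\Delta,\,x+\sum_{i=1}^{k}(\sqrt{\Delta}X_i+\Delta Y_i)\Bigr)\Bigr],
\]
with the right-hand side understood as an iterated sublinear expectation. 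Since $u^\Delta(n\Delta,x)$ is a constant with respect to these expectations, constant-preservation followed by sub-additivity and part (i) give
\[
|u^\Delta(m\Delta,x)-u^\Delta(n\Delta,x)|\le C_\phi\,\sube\Bigl[\Bigl|\sum_{i=1}^{k}(\sqrt{\Delta}X_i+\Delta Y_i)\Bigr|^\beta\Bigr].
\]
I then split the norm via $(a+b)^\beta\le a^\beta+b^\beta$ (valid for $a,b\ge 0$ and $\beta\le 1$), bound the $X$-part by $\sube[|Z|^\beta]\le\sube[|Z|^2]^{\beta/2}$ together with the identity $\sube[|\sum_{i=1}^{k}X_i|^2]=kM_X^2$ to get $(k\Delta M_X^2)^{\beta/2}$, and bound the $Y$-part by the simpler $\sube[|Z|^\beta]\le\sube[|Z|]^\beta$ together with sub-additivity to get $(k\Delta)^\beta (M_Y^2)^{\beta/2}$. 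Using $k\Delta\le|s-t|+\Delta$, the sub-additivity $(k\Delta)^{\beta/2}\le|s-t|^{\beta/2}+\Delta^{\beta/2}$, the factorization $(k\Delta)^\beta\le T^{\beta/2}(k\Delta)^{\beta/2}$ (since $k\Delta\le T$), and $T^{\beta/2}\le e^{\beta T/2}$, these assemble into a bound of the asserted form with the constant $K_0$.

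The main obstacle is the identity $\sube[|\sum_{i=1}^{k}X_i|^2]=kM_X^2$, which in the sublinear framework genuinely relies on the no-mean-uncertainty assumption $\sube[X]=\sube[-X]=0$: by independence and positive homogeneity one first shows $\sube[\langle y,X\rangle]=0$ for every $y\in\rd$, which via iterated expectation yields $\sube[\langle X_i,X_j\rangle]=0$ for $i\ne j$, so that a short induction on $k$ closes the cross-term cancellation. The iterated-expectation representation used above also needs a careful induction, since the nonlinearity of $\sube$ precludes a direct tower-property argument, but the same induction template already employed in the proof of (\ref{clt_representation}) applies verbatim here.
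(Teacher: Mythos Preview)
Your proof of (i) is identical to the paper's. For (ii), your argument is correct but follows a genuinely different route. You mirror the continuous-time proof of Lemma~\ref{regularity} by invoking the iterated representation $u^\Delta(m\Delta,x)=\sube[u^\Delta(n\Delta,x+\sum_{i=1}^k(\sqrt{\Delta}X_i+\Delta Y_i))]$ and then controlling the $\beta$-moment of the random increment directly, with the key input being the identity $\sube[|\sum_{i=1}^k X_i|^2]=kM_X^2$ (which you correctly derive from no-mean-uncertainty via iterated conditioning). The paper instead avoids i.i.d.\ copies altogether: it first lifts the $\beta$-H\"older estimate of part (i) to a quadratic bound $u^\Delta(t,x)\le u^\Delta(t,y)+a|x-y|^2+b$ via Young's inequality, then runs a one-step induction on the recursion (\ref{scheme}) to propagate this quadratic bound over $k$ steps (using only $\sube[\langle z,X\rangle]=0$), and finally optimizes over the Young parameter. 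Your approach is more transparent and in fact yields the sharper constant $K_0$ rather than $\sqrt{3}K_0$; the price is that you need the sublinear expectation space to carry the i.i.d.\ sequence $\{(X_i,Y_i)\}$, which is not part of the hypotheses of Lemma~\ref{regularity1} (though it can always be arranged by a standard product-space enlargement without changing $u^\Delta$). The paper's approach is more self-contained, working purely with the one-step operator $\mathbf{S}(\Delta)$, at the cost of the auxiliary optimization and the extra factor $\sqrt{3}$.
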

\begin{proof}
We first establish the estimate (i) using induction. It is clear that the estimate holds for $t\in[0,\Delta)$. In general, suppose the estimate holds for $t\in[(n-1)\Delta,n\Delta)$ with $n\Delta\leq T$. Then, for $t\in[n\Delta,((n+1)\Delta)\wedge T)$, the sub-additivity of $\sube$ yields
\begin{align*}
  |u^{\Delta}(t,x)-u^{\Delta}(t,y)|&=\left|\sube
  [u^{\Delta}(t-\Delta,x+\sqrt{\Delta}X+\Delta Y)]-\sube
  [u^{\Delta}(t-\Delta,y+\sqrt{\Delta}X+\Delta Y)]\right|\\
  &\leq \sube\left|u^{\Delta}(t-\Delta,x+\sqrt{\Delta}X+\Delta Y)-u^{\Delta}(t-\Delta,y+\sqrt{\Delta}X+\Delta Y)\right|\\
  &\leq \sube[C_{\phi}|x-y|^{\beta}]=C_{\phi}|x-y|^{\beta}.
\end{align*}
where we also used the constant preserving property in the last inequality.

To establish the time regularity for $u^{\Delta}$ in (ii), we divide its proof into four steps.

\emph{Step 1.} We lift the H\"older exponent $\beta$ to $2$ in the estimate (i). Note that the Young's inequality implies that
$$xy\leq \frac{\beta}{2}x^{\frac{2}{\beta}}+\frac{2-\beta}{2}y^{\frac{2}{2-\beta}},\ x,y\geq 0.$$
In turn, for $\alpha\geq 0$ and $\varepsilon>0$, let $x=\alpha^{\beta}$ and $y=\frac{1}{\varepsilon}$, and we have
$$\alpha^{\beta}\leq \frac{\beta}{2}\varepsilon\alpha^2+\frac{2-\beta}{2}\varepsilon^{\frac{-\beta}{2-\beta}},$$
Hence, it follows from (i) that
\begin{equation}\label{estimate1}
u^{\Delta}(t,x)\leq u^{\Delta}(t,y)+a|x-y|^{2}+b,\ x,y\in\mathbb{R}^d,
\end{equation}
where $a:=C_{\phi}\frac{\beta}{2}\varepsilon$ and $b:=C_{\phi}\frac{2-\beta}{2}\varepsilon^{\frac{-\beta}{2-\beta}}$.

\emph{Step 2.} Define $T_\Delta:=\{k\Delta:k\in\mathbb{N}\}$. Then, for $\tau\in[0,T)\cap T_\Delta$ and $k\in\mathbb{N}$ such that $\tau+k\Delta\le T$, we aim to show that
\begin{equation}\label{claim}
u^\Delta(\tau+k\Delta,x)\le u^\Delta(\tau,y)+a(1+\Delta)^k|x-y|^2+aNe^Tk\Delta+b,
\end{equation}
with $a$ and $b$ given in (\ref{estimate1}) and $N:=2M_X^2+3M_Y^2$. Indeed, it is clear that (\ref{claim}) holds for $k=0$. Suppose (\ref{claim}) holds for some $k\in\mathbb{N}$, then,
\begin{align}\label{estimate_11}
u^\Delta(\tau+(k+1)\Delta,x)= &\ \sube[u^\Delta(\tau+k\Delta,x+\sqrt{\Delta}X+\Delta Y)] \notag\\
 \le &\
 u^\Delta(\tau,y)+a(1+\Delta)^k\sube[|x-y+\sqrt{\Delta}X+\Delta Y|^2]\notag\\
 &\ +
 aNe^Tk\Delta+b.
\end{align}
For the sublinear expectation on the RHS of (\ref{estimate_11}), we have
\begin{align*}
&\sube[|x-y+\sqrt{\Delta}X+\Delta Y|^2]\\
    \leq&\ |x-y|^2+2\sube[\langle x-y,\sqrt{\Delta}X\rangle]+2\Delta\sube[\langle x-y,Y\rangle]+\Delta M_X^2+\Delta^2M_Y^2+2\Delta^{\frac32}\sube[\langle X,Y\rangle].
\end{align*}
Since $X$ has no mean uncertainty (cf. Assumption \ref{assumption1}(ii)), it follows that  $\sube[\langle x-y,\sqrt{\Delta}X\rangle]=0$. Furthermore, since
$2\langle x-y,Y\rangle\leq|x-y|^2+|Y|^2$ and $2\langle X,Y\rangle\leq|X|^2+|Y|^2,$
\begin{equation}\label{estimate_12}
\sube[|x-y+\sqrt{\Delta}X+\Delta Y|^2]
\leq(1+\Delta)|x-y|^2+\Delta (2M_X^2+3M_Y^2)=(1+\Delta)|x-y|^2+\Delta N.
\end{equation}
Combining (\ref{estimate_11})-(\ref{estimate_12}) and the fact that $(1+\Delta)^k\le(1+\Delta)^{T/\Delta}\le e^T$, we have
\begin{align*}
u^\Delta(\tau+(k+1)\Delta,x)
 \le &\
 u^\Delta(\tau,y)+a(1+\Delta)^{k+1}|x-y|^2 +
 aNe^T(k+1)\Delta+b,
\end{align*}
which shows that (\ref{claim}) also holds for $(k+1)$.

\emph{Step 3.} We show that the estimate (ii) holds on $t,s\in [0,T)\cap T_{\Delta}$. Indeed, taking $y=x$ in (\ref{claim}), we obtain $$u^\Delta(\tau+k\Delta,x)\le u^\Delta(\tau,x)+C_{\phi}\frac{\beta}{2}\varepsilon Ne^Tk\Delta+C_{\phi}\frac{2-\beta}{2}\varepsilon^{\frac{-\beta}{2-\beta}},$$
for any $\varepsilon>0$. Minimizing the RHS of the above inequality over $\varepsilon$ then yields
$$u^\Delta(\tau+k\Delta,x)\le u^\Delta(\tau,x)+C_{\phi}(Ne^T)^{\frac{\beta}{2}}(k\Delta)^{\frac{\beta}{2}}.$$

\emph{Step 4.} In general, for $s,t\in[0,T]$ \textcolor[rgb]{1.00,0.00,0.00}{such that $s\geq t$}, let $\delta_s$, $\delta_t\in[0,\Delta)$ such that $s-\delta_s$, $t-\delta_t\in T_\Delta$. Then, from (\ref{interp}) we have
\begin{align*}
u^\Delta(s,x)=u^\Delta(s-\delta_s,x) \le &\
 u^\Delta(t-\delta_t,x)+C_{\phi}(Ne^T)^{\frac{\beta}{2}}(s-t-\delta_s+\delta_t)^{\beta/2}\\
 \le &\
 u^\Delta(t,x)+C_{\phi}(Ne^T)^{\frac{\beta}{2}}((s-t)^{\beta/2}+\Delta^{\beta/2}).
\end{align*}
Similarly, we also have, \textcolor[rgb]{1.00,0.00,0.00}{for $t\geq s$ and $s,t\in[0,T]$,}
$$u^\Delta(t,x) \le u^\Delta(s,x)+C_{\phi}(Ne^T)^{\frac{\beta}{2}}((s-t)^{\beta/2}+\Delta^{\beta/2}),$$
from which we then conclude by observing that $(Ne^T)^{\frac{\beta}{2}}\leq \sqrt{3}K_0$.
\end{proof}

\begin{remark}
Note that $u^{\Delta}$ is a piecewise constant approximation of $u$, so it is not continuous in time (with jumps at the partition points $\tau\in T_{\Delta}$). The discontinuity leads to the additional term $\Delta^{{\beta}/{2}}$ in the time regularity of $u^{\Delta}$. Such type of time regularity property also appears in Lemma 2.2 of \cite{Krylov4} in a stochastic control setting. Our regularity result could be regarded as a generalization of \cite{Krylov4} to the sublinear expectation setting.
\end{remark}

\section{A monotone approximation scheme for the G-equation}\label{sec-scheme}

The proof of Theorem \ref{maintheorem} is based on the monotone schemes for viscosity solutions, the framework of which was first introduced by Barles and Souganidis \cite{Barles}. Hence, we first rewrite the recursive approximation (\ref{scheme}) as a monotone scheme, and then derive its consistency error estimates.


Recall that $\mathcal{C}_{lb}(\rd)$ is the space of lower bounded continuous functions on $\rd$. We define a forward operator on $\mathcal{C}_{lb}(\rd)$ as
$$\mathbf{S}(\Delta)\psi(x)=\sube[\psi(x+\sqrt{\Delta}X+\Delta Y)],\ \psi\in\mathcal{C}_{lb}(\mathbb{R}^{d}).$$
Then, from the properties of the sublinear expectation $\sube$, we immediately deduce that the forward operator $\mathbf{S}(\Delta)$ satisfies

(i) \emph{(Monotonicity)} For any
$\psi'\in\mathcal{C}_{lb}(\mathbb{R}^{d})$ with $\psi'\geq \psi$,
\begin{equation}\label{axiom1}
\mathbf{S}(\Delta)\psi'\geq\mathbf{S}(\Delta)\psi.
\end{equation}

(ii) \emph{(Constant preserving)} For any $c\in\mathbb{R}$,
\begin{equation}\label{axiom2}
\mathbf{S}(\Delta)(\psi+c)= \mathbf{S}(\Delta)\psi+c.
\end{equation}

(iii) \emph{(Sub-additivity)} For any $\psi'\in\mathcal{C}_{lb}(\mathbb{R}^{d})$,
\begin{equation}\label{axiom3}
\mathbf{S}(\Delta)(\psi'+\psi)\leq\mathbf{S}(\Delta)\psi'+\mathbf{S}(\Delta)\psi.
\end{equation}

(iv) \emph{(Positive homogeneity)} For any $\lambda\geq 0$,
\begin{equation}\label{axiom4}
\mathbf{S}(\Delta)(\lambda\psi)=\lambda\mathbf{S}(\Delta)\psi.
\end{equation}
Note that (iii) and (iv) imply that $\mathbf{S}(\Delta)\psi$ is convex in $\psi$. On the other hand, the lower boundedness of $\psi$ guarantee the Fatou's property (see Lemma 2.6 in \cite{CJP}): Let $\psi_n\in\mathcal{C}_{lb}(\mathbb{R}^d)$ converges uniformly to $\psi$, then
\begin{equation}\label{Fatou}
\mathbf{S}(\Delta)\psi(x)\leq \liminf_{n}\mathbf{S}(\Delta)\psi_n(x).
\end{equation}

The following error estimates play a vital rule to derive the consistency error estimates for the monotone approximation scheme introduced in section \ref{sebsection: splitting} (see Proposition \ref{scheme property}(iii)).

\begin{proposition}\label{operator} Suppose that Assumption \ref{assumption1}(ii) is satisfied. For $\Delta\in(0,1)$, define
\begin{equation}\label{operator_E}
\mathcal{E}(\Delta,\psi):=
\left|\frac{\mathbf{S}(\Delta)\psi-\psi}{\Delta}-G(D\psi,D^2\psi)\right|_{0}.
\end{equation}

(i) If $\psi\in\mathcal{C}^{2+\alpha}_{b}(\rd)$ for some $\alpha\in(0,1)$, then
$$\mathcal{E}(\Delta,\psi)\leq \ \Delta^{\alpha/2}[D^2\psi]_{\mathcal{C}^\alpha}M_X^{2+\alpha}+\sqrt{\Delta}|D^2\psi|_0(M_X^2+M_Y^2).$$

(ii) If $\psi\in\mathcal{C}_{lb}^{\infty}(\rd)$, then
$$\mathcal{E}(\Delta,\psi)\leq \ \sqrt{\Delta}|D^3\psi|_0M_X^{3}+\sqrt{\Delta}|D^2\psi|_0(M_X^2+M_Y^2).$$

\end{proposition}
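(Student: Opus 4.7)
The plan is to Taylor expand $\psi(x+\sqrt{\Delta}X+\Delta Y)$ around $x$ and then carefully use the four axioms of $\sube$. Constant-preserving gives $\mathbf{S}(\Delta)\psi(x)-\psi(x) = \sube[\psi(x+\sqrt{\Delta}X+\Delta Y)-\psi(x)]$, and positive homogeneity identifies the target $\Delta\,G(D\psi(x),D^2\psi(x)) = \sube\bigl[\Delta\langle D\psi, Y\rangle + \tfrac{\Delta}{2}\langle D^2\psi X, X\rangle\bigr]$. I would exploit the following cancellation: since $X$ has no mean uncertainty, $\sube[\pm\sqrt{\Delta}\langle D\psi(x),X\rangle]=0$, and combining this with sub-additivity in both directions yields $\sube[A + \sqrt{\Delta}\langle D\psi(x),X\rangle] = \sube[A]$ for any lower-bounded $A$, so the first-order-in-$X$ Taylor term is absorbed exactly.

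The central technical step is a two-step Taylor split
\begin{equation*}
\psi(x+\sqrt{\Delta}X+\Delta Y) - \psi(x) = \bigl[\psi(x+\sqrt{\Delta}X) - \psi(x)\bigr] + \bigl[\psi(x+\sqrt{\Delta}X+\Delta Y) - \psi(x+\sqrt{\Delta}X)\bigr],
\end{equation*}
in which I would expand the first bracket to second order in $\sqrt{\Delta}X$ and the second bracket to first order in $\Delta Y$ based at $x+\sqrt{\Delta}X$. The first bracket produces the desired $\sqrt{\Delta}\langle D\psi, X\rangle + \tfrac{\Delta}{2}\langle D^2\psi X, X\rangle$, with remainder bounded pointwise by $C_\alpha\Delta^{(2+\alpha)/2}[D^2\psi]_{\mathcal{C}^\alpha}|X|^{2+\alpha}$ in case (i) (using the H\"older form of the integral remainder $\int_0^1(1-s)[D^2\psi(x+s\sqrt{\Delta}X)-D^2\psi(x)](\sqrt{\Delta}X,\sqrt{\Delta}X)\,ds$) and by $\tfrac{1}{6}\Delta^{3/2}|D^3\psi|_0|X|^3$ in case (ii). The second bracket produces $\Delta\langle D\psi(x+\sqrt{\Delta}X),Y\rangle$ with remainder $\tfrac{\Delta^2}{2}|D^2\psi|_0|Y|^2$; shifting the base point from $x+\sqrt{\Delta}X$ to $x$ inside $D\psi$ contributes an additional cross-term bounded by $\Delta^{3/2}|D^2\psi|_0|X||Y|$.

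Assembling these pieces, the increment takes the form $\sqrt{\Delta}\langle D\psi, X\rangle + \Delta g(X,Y) + E$, where $g(X,Y) := \langle D\psi, Y\rangle + \tfrac12\langle D^2\psi X, X\rangle$. Applying $\sube$, invoking the linear-term cancellation, and using the general sub-additivity bound $|\sube[A+B]-\sube[A]|\le \sube[|B|]$ (which follows from $\sube[A+B]\le\sube[A]+\sube[B]$ and monotonicity $\sube[B]\le\sube[|B|]$), I will obtain $\mathcal{E}(\Delta,\psi) \le \sube[|E|]/\Delta$. The remaining moment estimates are routine: $\sube[|X|^{2+\alpha}] \le M_X^{2+\alpha}$ (finite since $M_X^3<\infty$), $\sube[|X|^3]=M_X^3$, $\sube[|Y|^2]=M_Y^2$, and the cross-term bound $\sube[|X||Y|] \le \tfrac12(M_X^2+M_Y^2)$ via $2|X||Y|\le|X|^2+|Y|^2$ and monotonicity. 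Absorbing factors of $\Delta\in(0,1)$ against $\sqrt{\Delta}$ where needed then gives both bounds.

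The hard part will be the asymmetric moment information --- third moment for $X$ but only second for $Y$ --- which rules out a one-step Taylor expansion in the full increment $z=\sqrt{\Delta}X+\Delta Y$, since its remainder of order $|z|^{2+\alpha}$ or $|z|^3$ would demand $M_Y^{2+\alpha}$ or $M_Y^3$ that are not assumed. The two-step split above, treating $\sqrt{\Delta}X$ to higher order than $\Delta Y$, is designed precisely to avoid this. A second delicate point is ensuring that the first-order-in-$X$ Taylor term cancels exactly under the non-linear $\sube$ --- it would otherwise contribute an $O(1/\sqrt{\Delta})$ error after division by $\Delta$ --- which is where no-mean-uncertainty of $X$ combined with sub-additivity is essential.
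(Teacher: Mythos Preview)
Your proposal is correct and follows essentially the same approach as the paper: the same two-step Taylor split (second order in $\sqrt{\Delta}X$, first order in $\Delta Y$ with a base-point shift), the same exact cancellation of the $\sqrt{\Delta}\langle D\psi,X\rangle$ term via no-mean-uncertainty combined with sub-additivity, and the same cross-term estimate $2|X||Y|\le |X|^2+|Y|^2$. The only cosmetic difference is that you package both directions at once via $|\sube[A+B]-\sube[A]|\le\sube[|B|]$, whereas the paper writes out the upper bound explicitly and then says ``similarly, we obtain lower bounds.''
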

\begin{proof}
We only consider the case $d=1$, since the general case follows along similar albeit more complicated arguments. Note that for any $x\in\mathbb{R}$,
\begin{align*}
 &\ \mathbf{S}(\Delta)\psi(x)-\psi(x)-\Delta G(D\psi(x),D^2\psi(x)) \\
 \le &\
 \sube[\psi(x+\sqrt{\Delta}X+\Delta Y)-\psi(x)-\Delta D\psi(x)Y-\frac12 \Delta D^2\psi(x)X^2] \\
 \le &\
 \sube[\psi(x+\sqrt{\Delta}X)-\psi(x)-\frac12 \Delta D^2\psi(x)X^2]\\
 &\ +
 \sube[\psi(x+\sqrt{\Delta}X+\Delta Y)-\psi(x+\sqrt{\Delta}X)-\Delta D\psi(x)Y]:=(I)+(II).
\end{align*}

Next, we obtain upper bounds for terms $(I)$ and $(II)$. To this end, Taylor’s expansion and the assumption that $\sube[X]=\sube[-X]=0$ yield
\begin{align*}
(I) = &\
\sube\left[\sqrt{\Delta}D\psi(x)X+\int_x^{x+\sqrt{\Delta}X}\int_x^s(D^2\psi(u)-D^2\psi(x))duds\right] \\
= &\
\sube\left[\int_x^{x+\sqrt{\Delta}X}\int_x^s(D^2\psi(u)-D^2\psi(x))duds\right].
\end{align*}
In case (i), $|D^2\psi(u)-D^2\psi(x)|\le [D^2\psi]_{\mathcal{C}^\alpha}|u-x|^\alpha$, thus,
\begin{align*}
(I) \le &\
[D^2\psi]_{\mathcal{C}^\alpha}\sube\left[\int_x^{x+\sqrt{\Delta}X}\int_x^s|u-x|^\alpha du ds\right] \\
\le &\
[D^2\psi]_{\mathcal{C}^\alpha}\sube\left[\Delta^{1+\frac{\alpha}{2}}|X|^{2+\alpha}/(1+\alpha)(2+\alpha)\right] \le \Delta^{1+\frac{\alpha}{2}}[D^2\psi]_{\mathcal{C}^\alpha}M_X^{2+\alpha}.
\end{align*}
In case (ii), $|D^2\psi(u)-D^2\psi(x)|\le |D^3\psi|_0|u-x|$, thus,
\begin{align*}
(I) = &\
|D^3\psi|_0\sube\left[\int_x^{x+\sqrt{\Delta}X}\int_x^s|u-x|du ds\right] \\
\le &\
|D^3\psi|_0\sube\left[\Delta^\frac32|X|^3/6\right] \le \Delta^\frac32|D^3\psi|_0M_X^3.
\end{align*}
Regarding term (II), for both cases (i) and (ii), we have
\begin{align*}
 (II) = &\
\sube\left[ \int_{x+\sqrt{\Delta}X}^{x+\sqrt{\Delta}X+\Delta Y}(D\psi(s)-D\psi(x))ds\right]\\
= &\
\sube\left[ \int_{x+\sqrt{\Delta}X}^{x+\sqrt{\Delta}X+\Delta Y}\int_x^sD^2\psi(u)duds\right]\\
\le &\
|D^2\psi|_0\sube\left[\left|\frac{(\sqrt{\Delta}X+\Delta Y)^2-(\sqrt{\Delta}X)^2}{2}\right|\mathbf{1}_{\{\sqrt{\Delta}X(\sqrt{\Delta}X+\Delta Y)\ge 0\}}\right. \\
&\ +
\left.  \frac{(\sqrt{\Delta}X+\Delta Y)^2+(\sqrt{\Delta}X)^2}{2}\mathbf{1}_{\{\sqrt{\Delta}X(\sqrt{\Delta}X+\Delta Y)< 0\}}\right]\\
\le &\
|D^2\psi|_0\sube\left[(\Delta Y)^2/2+\Delta^\frac32|X||Y|\right] \\
\le &\
|D^2\psi|_0\sube\left[(\Delta Y)^2/2+\Delta^\frac32(|X|^2+|Y|^2)/2\right] \le
 \Delta^\frac32|D^2\psi|_0(M_X^2+M_Y^2)
\end{align*}
\textcolor[rgb]{1.00,0.00,0.00}{Combining the two estimates for terms (I) and (II)}, we obtain, for any $x\in\mathbb{R}$, that
$$\frac{\mathbf{S}(\Delta)\psi(x)-\psi(x)}{\Delta}-G(D\psi(x),D^2\psi(x))\le \Delta^{\alpha/2}[D^2\psi]_{\mathcal{C}^\alpha}M_X^{2+\alpha}+\sqrt{\Delta}|D^2\psi|_0(M_X^2+M_Y^2),$$
in case (i),
and that
$$\frac{\mathbf{S}(\Delta)\psi(x)-\psi(x)}{\Delta}-G(D\psi(x),D^2\psi(x))\le \sqrt{\Delta}|D^3\psi|_0M_X^{3}+\sqrt{\Delta}|D^2\psi|_0(M_X^2+M_Y^2),$$
in case (ii).
Similarly, we obtain lower bounds of $\mathcal{E}(\Delta,\psi)$, and this completes the proof.
\end{proof}

\subsection{The monotone approximation scheme}\label{sebsection: splitting}

For $\Delta\in(0,1)$, we let ${Q}_T^\Delta:=(\Delta,T]\times\rd$. Then, based on (\ref{scheme}) and $\mathbf{S}(\Delta)$, we introduce the approximation scheme as
\begin{eqnarray}\label{semischeme}
\left\{
\begin{array}{ll}
\displaystyle {S}(\Delta, x,u^\Delta(t,x),u^\Delta(t-\Delta,\cdot))=0 &\text{in}\  \bar{Q}_{T}^\Delta,\\
\displaystyle u^\Delta(t,x)=\phi(x)&\text{in}\
\bar{Q}_{T}\backslash\bar{Q}_{T}^\Delta,
\end{array}
\right.
\end{eqnarray}
where
${S}:(0,1)\times\rd\times\mathbb{R}\times\mathcal{C}_{lb}(\mathbb{R}^n)\rightarrow\mathbb{R}$ is defined by
\begin{equation}\label{Sbar2}
S(\Delta,x,p,v)=\frac{p-\mathbf{S}(\Delta)v(x)}{\Delta}.
\end{equation}

From the properties of the forward operator $\mathbf{S}(\Delta)$ and Proposition \ref{operator}, we obtain the following key
properties of the approximation scheme (\ref{semischeme}).

\begin{proposition}\label{scheme property}
Suppose that Assumption \ref{assumption1}(ii) is satisfied. Then,
the following properties hold for
the approximation scheme
${S}(\Delta,x,p,v)$ given in (\ref{semischeme}).

(i) (Monotonicity) For any $c_{1}, c_{2}\in\mathbb{R}$, and any function
$u\in\mathcal{C}_{lb}(\mathbb{R}^{n})$ with $u\le v$,
$${S}(\Delta,x,p+c_{1},u+c_{2})\ge {S}(\Delta,x,p,v)+\frac{c_{1}-c_{2}}{\Delta}.$$

(ii) (Concavity) $S(\Delta,x,p,v)$ is concave in $(p,v)$.

(iii) (Consistency) (a) If $\psi\in\mathcal{C}_{b}^{1+\frac{\alpha}{2},2+\alpha}({Q}_T)$  for some $\alpha\in(0,1)$, then
\begin{align}\label{consistant_error11}
 &\ |\partial_t\psi-G(D_{x}\psi,D_{x}^2\psi)-{S}(\Delta,x,\psi,\psi(t-\Delta,\cdot))|\notag\\
 \le  &\
 K_{\alpha}\left(\Delta^{\alpha/2}\left([D_x^2\psi]_{\mathcal{C}^{\alpha/2,\alpha}}+
 [\partial_{t}\psi]_{_{\mathcal{C}^{\alpha/2,\alpha}}}\right)+\sqrt{\Delta}|D_x^2\psi|_0+\Delta\left(|\partial_{t}D_x^2\psi|_0+|\partial_{t}D_x\psi|_{0}\right)\right)\ \text{in}\  {Q}_{T}^\Delta,
\end{align}
where the constant $K_{\alpha}$ is given by
\begin{equation}\label{constant_K_alpha}
K_{\alpha}:=1+M_Y^1+M_Y^2+M_X^2+M_X^{2+\alpha}.
\end{equation}

\indent (b) If $\psi\in\mathcal{C}_{lb}^{\infty}({Q}_T)$, then
\begin{align}\label{consistant_error}
 &\ |\partial_t\psi-G(D_{x}\psi,D_{x}^2\psi)-{S}(\Delta,x,\psi,\psi(t-\Delta,\cdot))|\notag\\
 \le  &\
 K_1\left(\sqrt{\Delta}\left(|D_x^3\psi|_0+|D_x^2\psi|_0\right)+\Delta\left(|\partial_{t}^2\psi|_{0}+|\partial_{t}D_x^2\psi|_0+|\partial_{t}D_x\psi|_{0}\right)\right)\ \text{in}\  {Q}_{T}^\Delta.
\end{align}
where the constant $K_1$ is given by
\begin{equation}\label{constant_K_1}
K_1:=1+M_Y^1+M_Y^2+M_X^2+M_X^{3}
\end{equation}
\end{proposition}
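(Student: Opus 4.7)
The plan is to deduce parts (i) and (ii) directly from the four axioms \eqref{axiom1}--\eqref{axiom4} of $\mathbf{S}(\Delta)$, and to establish part (iii) by a three-term splitting that combines a time-discretization residual, one application of Proposition \ref{operator}, and a Lipschitz-in-time bound for $G$.

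For (i), constant preserving \eqref{axiom2} gives $\mathbf{S}(\Delta)(u+c_{2})=\mathbf{S}(\Delta)u+c_{2}$, and monotonicity \eqref{axiom1} gives $\mathbf{S}(\Delta)u\le\mathbf{S}(\Delta)v$ whenever $u\le v$; the claimed inequality then follows immediately by unpacking the definition \eqref{Sbar2}. For (ii), sub-additivity \eqref{axiom3} together with positive homogeneity \eqref{axiom4} forces $v\mapsto\mathbf{S}(\Delta)v(x)$ to be convex, so $v\mapsto-\mathbf{S}(\Delta)v(x)/\Delta$ is concave; combined with the linearity of $p\mapsto p/\Delta$, this yields joint concavity of $S(\Delta,x,p,v)$.

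The substance lies in (iii). I would split the residual as
\begin{align*}
\partial_t\psi&-G(D_x\psi,D_x^2\psi)-S(\Delta,x,\psi,\psi(t-\Delta,\cdot))\\
=\ &\Bigl[\partial_t\psi(t,x)-\tfrac{\psi(t,x)-\psi(t-\Delta,x)}{\Delta}\Bigr]\\
&+\Bigl[\tfrac{\mathbf{S}(\Delta)\psi(t-\Delta,\cdot)(x)-\psi(t-\Delta,x)}{\Delta}-G(D_x\psi(t-\Delta,x),D_x^2\psi(t-\Delta,x))\Bigr]\\
&+\Bigl[G(D_x\psi(t-\Delta,x),D_x^2\psi(t-\Delta,x))-G(D_x\psi(t,x),D_x^2\psi(t,x))\Bigr].
\end{align*}
The first bracket is the time-discretization remainder: in case (a) the mean value theorem together with H\"older regularity of $\partial_t\psi$ gives a bound of order $\Delta^{\alpha/2}[\partial_t\psi]_{\mathcal{C}^{\alpha/2,\alpha}}$, while in case (b) a second-order Taylor expansion in $t$ yields a bound of order $\Delta|\partial_t^2\psi|_0$. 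The second bracket is precisely $\mathcal{E}(\Delta,\psi(t-\Delta,\cdot))$ from \eqref{operator_E} evaluated pointwise, so Proposition \ref{operator} delivers $\Delta^{\alpha/2}[D_x^2\psi]_{\mathcal{C}^{\alpha/2,\alpha}}M_X^{2+\alpha}+\sqrt{\Delta}|D_x^2\psi|_0(M_X^2+M_Y^2)$ in (a) and $\sqrt{\Delta}|D_x^3\psi|_0 M_X^3+\sqrt{\Delta}|D_x^2\psi|_0(M_X^2+M_Y^2)$ in (b), after noting that the spatial Hölder seminorm at the frozen time $t-\Delta$ is dominated by $[D_x^2\psi]_{\mathcal{C}^{\alpha/2,\alpha}}$. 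For the third bracket I would use sub-additivity of $\sube$ in the definition \eqref{functionG} to obtain $|G(p_1,A_1)-G(p_2,A_2)|\le M_Y^1|p_1-p_2|+\tfrac12 M_X^2|A_1-A_2|$, and then control the time increments of $D_x\psi$ and $D_x^2\psi$ by $\Delta|\partial_tD_x\psi|_0$ and $\Delta|\partial_tD_x^2\psi|_0$, respectively.

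Collecting the three contributions and absorbing the moment factors into the constants $K_\alpha$ and $K_1$ defined in \eqref{constant_K_alpha} and \eqref{constant_K_1} then produces both \eqref{consistant_error11} and \eqref{consistant_error}. The main technical point to keep in mind is that the second and third brackets evaluate spatial derivatives of $\psi$ at time $t-\Delta$ rather than at $t$; this asymmetry is precisely what forces $|\partial_tD_x\psi|_0$ and $|\partial_tD_x^2\psi|_0$ onto the right-hand sides, and it is also why the (crude) Lipschitz-in-arguments stability of $G$ derived from sub-additivity of $\sube$ is sufficient to close the argument without invoking any finer nonlinear structure of $G$.
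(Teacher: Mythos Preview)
Your proposal is correct and follows essentially the same approach as the paper: parts (i)--(ii) are dispatched via the axioms of $\mathbf{S}(\Delta)$, and part (iii) is obtained by the identical three-term splitting into a time-discretization remainder, the $\mathcal{E}(\Delta,\psi(t-\Delta,\cdot))$ term handled by Proposition~\ref{operator}, and a Lipschitz-in-arguments bound for $G$ coming from sub-additivity of $\sube$. The only cosmetic difference is the ordering of the three terms.
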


\begin{proof}
Parts (i)-(ii) are immediate, so
we only prove (iii). To this end, we split the consistency error into
three parts. Specifically, for $(t,x)\in {Q}_{T}^\Delta$,
\begin{align*}
&|\partial_t\psi-G(D_{x}\psi,D_{x}^2\psi)-{S}(\Delta,x,\psi,\psi(t-\Delta,\cdot))|\\
\leq &\ {\mathcal{E}(\Delta,\psi(t-\Delta,\cdot))} +
{|\psi(t,x)-\psi(t-\Delta,x)-\Delta\partial_t\psi(t,x)|\Delta^{-1}}\\
&+
{|G(D_{x}\psi(t,x),D_{x}^2\psi(t,x))-G(D_{x}\psi(t-\Delta,x),D_{x}^2\psi(t-\Delta,x))|}
:=(I)+(II)+(III),
\end{align*}
where $\mathcal{E}$ is defined in (\ref{operator_E}). Here we only consider the case (b); the case (a) only requires minor modification that is similar to the proof of Proposition \ref{operator}(i). For term (I),
Proposition \ref{operator} (ii) yields
\begin{equation}\label{estimate8}
\mathcal{E}(\Delta,\psi(t-\Delta,\cdot)) \leq (M_X^3+M_Y^2+M_X^2)\sqrt{\Delta}\left(|D_x^3\psi|_{0}+|D_x^2\psi|_{0}\right).
\end{equation}
For term (II), Taylor's expansion gives
\begin{align}\label{estimate9}
&\
{|\psi(t,x)-\psi(t-\Delta,x)-\Delta\partial_t\psi(t,x)|\Delta^{-1}}\notag\\
\leq&\
|\int_{t-\Delta}^{t}\left(\partial_{t}\psi(s,x)-\partial_{t}\psi(t,x)\right)ds|\Delta^{-1}\notag\\
\leq&\ \Delta^{-1}|\partial_{t}^2\psi|_0\int_{t-\Delta}^{t}\left(t-s\right)ds \le \Delta|\partial_{t}^2\psi|_0.
\end{align}
Finally, for term (III), we have
\begin{align}\label{estimate10}
&\
|G(D_{x}\psi(t,x),D_{x}^2\psi(t,x))-G(D_{x}\psi(t-\Delta,x),D_{x}^2\psi(t-\Delta,x))|\notag\\
\leq &\
\sube[|D_{x}\psi(t,x)-D_{x}\psi(t-\Delta,x)||Y|+\frac12|D_{x}^2\psi(t,x)-D_{x}^2\psi(t-\Delta,x)||X|^2]\notag\\
\leq &\ \Delta(M_Y^1|\partial_tD_{x}\psi|_{0}+M_X^2|\partial_{t}D_x^2\psi|_{0}),
\end{align}
Combining estimates (\ref{estimate8})-(\ref{estimate10}), we easily
conclude.
\end{proof}

\begin{remark}
Due to the monotonicity property (i) in Proposition \ref{scheme property}, the approximation scheme (\ref{semischeme}) is also referred to as the {monotone (approximation) scheme} in the sequel.
\end{remark}

\subsection{Comparison principle for the monotone approximation scheme}\label{sec-comparison}

The monotonicity property (i) in Proposition \ref{scheme property} also implies the following comparison principle for the monotone scheme (\ref{semischeme}), which will be used throughout this paper. Most of the arguments follow from Proposition 2.9 of \cite{Huang} (and Lemma 3.2 of \cite{BJ}), but we highlight some key
steps for the reader's convenience.

\begin{proposition}\label{schemecomparison}
Suppose that Assumption \ref{assumption1}(ii) is satisfied, and that $\underline{v}$,
$\bar{v}\in\mathcal{C}_{lb}(\bar{Q}_T)$ are such that
$${S}(\Delta,x,\underline{v},\underline{v}(t-\Delta,\cdot)) \leq h_1\ \text{in}\  {{Q}_{T}^\Delta},$$
$${S}(\Delta,x,\bar{v},\bar{v}(t-\Delta,\cdot)) \ge h_2\ \text{in}\  {{Q}_{T}^\Delta},$$
for some $h_1$, $h_2\in\mathcal{C}_{lb}({{Q}_{T}^\Delta})$. Then, for any $(t,x)\in \bar{Q}_{T}$.
\textcolor[rgb]{1.00,0.00,0.00}{\begin{equation}
\underline{v}(t,x)-\bar{v}(t,x)\leq \sup_{\bar{Q}_{T}\backslash
{{Q}_{T}^\Delta}}(\underline{v}(t,x)-\bar{v}(t,x))^{+}+t\sup_{{{Q}_{T}^\Delta}}(h_{1}(t,x)-h_{2}(t,x))^{+}.
\end{equation}
}
\end{proposition}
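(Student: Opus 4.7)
The plan is to reduce everything to a one-step recursive estimate on $w := \underline{v} - \bar{v}$ and then iterate in time. Unwinding the definition (\ref{Sbar2}), the assumed inequalities read
\begin{align*}
\underline{v}(t,x) &\le \Delta\, h_1(t,x) + \mathbf{S}(\Delta)\,\underline{v}(t-\Delta,\cdot)(x),\\
\bar{v}(t,x) &\ge \Delta\, h_2(t,x) + \mathbf{S}(\Delta)\,\bar{v}(t-\Delta,\cdot)(x),
\end{align*}
for $(t,x)\in {Q}_T^\Delta$. Writing $\underline{v}(t-\Delta,\cdot) = w(t-\Delta,\cdot) + \bar{v}(t-\Delta,\cdot)$ and applying the sub-additivity (\ref{axiom3}) of $\mathbf{S}(\Delta)$ gives
$$\mathbf{S}(\Delta)\,\underline{v}(t-\Delta,\cdot)(x) - \mathbf{S}(\Delta)\,\bar{v}(t-\Delta,\cdot)(x) \le \mathbf{S}(\Delta)\,w(t-\Delta,\cdot)(x),$$
so that, after subtracting the two scheme inequalities,
\begin{equation}\label{planrec}
w(t,x) \le \Delta\,(h_1-h_2)(t,x) + \mathbf{S}(\Delta)\,w(t-\Delta,\cdot)(x) \quad \text{in } Q_T^\Delta.
\end{equation}

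The next step is to turn (\ref{planrec}) into a scalar recursion. Set $H := \sup_{Q_T^\Delta}(h_1-h_2)^+$ and $W_0 := \sup_{\bar{Q}_T\setminus Q_T^\Delta}(\underline{v}-\bar{v})^+$. Since $w(t-\Delta,\cdot)$ is bounded above by a (possibly $t$-dependent) constant $c$, the monotonicity (\ref{axiom1}) together with constant preserving (\ref{axiom2}) (applied to the zero function) yields $\mathbf{S}(\Delta)\,w(t-\Delta,\cdot)(x)\le \mathbf{S}(\Delta)(c)=c$. Plugging this into (\ref{planrec}) collapses the functional inequality to a pointwise scalar recurrence involving only the running supremum of $w^+$.

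Now I iterate this over the time grid $T_\Delta$. Define $M_n := \sup_{t\in[0,n\Delta],\,x}w^+(t,x)$. For $t\in[0,\Delta]$ we have $M_1\le W_0$ by the boundary condition of the scheme. Suppose by induction that $M_n\le (n-1)\Delta H+W_0$ for some $n\ge 1$. For $t\in(n\Delta,(n+1)\Delta]$, the time slice $t-\Delta\in((n-1)\Delta,n\Delta]$, so $w(t-\Delta,\cdot)\le M_n$; inserting this bound into (\ref{planrec}) gives
$$w(t,x) \le \Delta H + M_n \le n\Delta H + W_0,$$
which upgrades the induction to $M_{n+1}\le n\Delta H + W_0$. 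Finally, for an arbitrary $t\in(n\Delta,(n+1)\Delta]\subset[0,T]$ one has $n\Delta\le t$, hence
$$w(t,x) \le W_0 + n\Delta\, H \le W_0 + t\, H = \sup_{\bar{Q}_T\setminus Q_T^\Delta}(\underline{v}-\bar{v})^+ + t\sup_{Q_T^\Delta}(h_1-h_2)^+,$$
which is the claimed inequality; the bound for $t\in[0,\Delta]$ is already contained in the base case.

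The only real subtlety is the very first step: obtaining (\ref{planrec}) requires the sub-additivity axiom in the direction $\mathbf{S}(\Delta)(f+g)\le\mathbf{S}(\Delta)f+\mathbf{S}(\Delta)g$ applied to the splitting $\underline{v}=(\underline{v}-\bar{v})+\bar{v}$; this is precisely the analogue of the linearity used in the classical (linear) comparison argument, and it is where the sublinear structure of $\hat{\mathbb{E}}$ is essentially used. After this, the time iteration is entirely elementary. One should note that no explicit regularity of $\underline{v},\bar{v}$ beyond membership in $\mathcal{C}_{lb}(\bar{Q}_T)$ is needed, so that the Fatou property (\ref{Fatou}) never has to be invoked.
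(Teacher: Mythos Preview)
Your proof is correct and takes a genuinely different route from the paper's. The paper follows the standard Barles--Jakobsen template: it first reduces to the case $\underline{v}\le\bar{v}$ on $\bar{Q}_T\setminus Q_T^\Delta$ and $h_1\le h_2$ on $Q_T^\Delta$ by adding the two suprema to $\bar{v}$, and then argues by contradiction using a penalisation $\psi_b(t)=bt$ and an almost-maximising sequence for $M(b)=\sup_{\bar{Q}_T}\{\underline{v}-\bar{v}-\psi_b\}$, invoking only the monotonicity property (i) of Proposition~\ref{scheme property}. You instead exploit the explicit discrete-time recursive structure of the scheme: unpack $S$ to obtain a one-step inequality, then iterate over the grid $T_\Delta$ by induction on the running supremum $M_n$. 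Your argument is more elementary and constructive, and makes the role of the time step $\Delta$ completely transparent; the paper's argument is more abstract but has the advantage of applying verbatim to schemes that do not have such a clean forward-recursive form.

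One small technical remark: your intermediate inequality~(\ref{planrec}) applies $\mathbf{S}(\Delta)$ to $w=\underline{v}-\bar{v}$, which need not lie in $\mathcal{C}_{lb}(\mathbb{R}^d)$ (since $\bar{v}$ is only assumed lower bounded, $w$ may fail to be lower bounded). You do not actually need this step: once $w(t-\Delta,\cdot)\le c$, the inequality $\underline{v}(t-\Delta,\cdot)\le \bar{v}(t-\Delta,\cdot)+c$ together with monotonicity and constant preserving (applied to $\bar{v}(t-\Delta,\cdot)\in\mathcal{C}_{lb}$) already gives $\mathbf{S}(\Delta)\underline{v}(t-\Delta,\cdot)\le \mathbf{S}(\Delta)\bar{v}(t-\Delta,\cdot)+c$, and the scalar recursion follows directly by subtracting the two scheme inequalities. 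This bypasses sub-additivity entirely and matches the paper's reliance on monotonicity alone. Both arguments implicitly assume $\underline{v}-\bar{v}$ is bounded above on $\bar{Q}_T$ (the paper needs $M(b)<\infty$, you need a finite $c$); this is harmless in every application in the paper.
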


\begin{proof}
Without loss of generality, we assume that
\begin{equation}\label{comparisonforscheme}
\underline{v}\le \bar{v}\ \text{in}\ \bar{Q}_{T}\backslash {{Q}_{T}^\Delta}\ \
\text{and}\ \ h_{1}\leq h_{2}\ \text{in}\ {{Q}_{T}^\Delta},
\end{equation}
since, otherwise, the function $w:=\bar{v}+\sup_{\bar{Q}_{T}\backslash
{{Q}_{T}^\Delta}}(\underline{v}-\bar{v})^{+}+t\sup_{{Q}_{T}^\Delta}(h_{1}-h_{2})^{+}$
satisfies that $\underline{v}\le w$ in $\bar{Q}_{T}\backslash
{Q}_{T}^\Delta$ and by the monotonicity property (i) in Proposition
\ref{scheme property},
\begin{align*}
{S}(\Delta,x,w,w(t-\Delta,\cdot)) &\ge
 {S}(\Delta,x,\bar{v},\bar{v}(t-\Delta,\cdot))+\sup_{{Q}_{T}^\Delta}(h_{1}-h_{2})^{+}\\
&\ge h_{2}+\sup_{{Q}_{T}^\Delta}(h_{1}-h_{2})^{+} \ge \textcolor[rgb]{1.00,0.00,0.00}{h_{2}}\
\text{in}\ {Q}_{T}^\Delta.
\end{align*}
Thus, it suffices to prove $\underline{v}\le \bar{v}$ in $\bar{Q}_{T}$ when
(\ref{comparisonforscheme}) holds.

To this end, for $b\ge 0$, let $\psi_{b}(t):=bt$ and
$M(b):=\sup_{\bar{Q}_{T}}\{\underline{v}-\bar{v}-\psi_{b}\}.$ Then our aim is to
prove $M(0)\leq 0$ and we prove by contradiction. Assume $M(0)>0$, then by the continuity of $M$,
we must have $M(b)>0$ for some $b>0$. For such $b$, take a sequence
$\{(t_{n},x_{n})\}_{n\geq 1}$ in $\bar{Q}_{T}$ such that $\delta_{n}:=M(b)-(\underline{v}-\bar{v}-\psi_{b})(t_{n},x_{n})\downarrow 0,\ \ \text{as}\ n\to\infty.$
Since $M(b)>0$ but $\underline{v}-\bar{v}-\psi_{b}\leq 0$ in $\bar{Q}_{T}\backslash {Q}_{T}^\Delta$, we must have $t_{n}> \Delta$ for sufficiently large $n$. Then for such $n$, we use the monotonicity property (i) in Proposition \ref{scheme property} again to obtain
\begin{align*}
h_1(t_{n},x_{n}) \ge &\ {S}(\Delta,x_{n},\underline{v}(t_{n},x_{n}),\underline{v}(t_{n}-\Delta,\cdot)) \\
 \ge &\
 {S}(\Delta,x_{n},\bar{v}(t_{n},x_{n})+\psi_{b}(t_{n})+M(b)-\delta_{n},\bar{v}(t_{n}-\Delta,\cdot)+\psi_{b}(t_{n}-\Delta)+M(b)) \\
 \ge &\
 {S}(\Delta,x_{n},\bar{v}(t_{n},x_{n}),\bar{v}(t_{n}-\Delta,\cdot))+b-\delta_{n}\Delta^{-1} \\
 \ge &\
 h_{2}(t_{n},x_{n})+b-\delta_{n}\Delta^{-1},
\end{align*}
Since $h_{1}\leq h_{2}$ in ${Q}_{T}^\Delta$, we then must have
$b-\delta_{n}\Delta^{-1}\leq 0$. Thus, we deduce $b\leq 0$ by
letting $n\to\infty$, which is a contradiction.
\end{proof}

\subsection{Convergence of the monotone approximation scheme}\label{sec-convergence}

We prove Theorem \ref{maintheorem}(i) by showing the convergence of the approximate solution $u^{\Delta}$ to the viscosity solution $u$. It is based on the monotone schemes for viscosity solutions introduced by Barles-Souganidis in \cite{Barles}, where they
show that any \emph{monotone, stable} and \emph{consistent}
numerical scheme converges, provided that there exists a comparison principle for the limiting equation.

To this end, define the semi-relaxed limits of $u^{\Delta}$ by
$$\overline{u}(t,x)=\limsup_{(t',{x}')\rightarrow(t,x),
\atop \Delta\rightarrow 0}u^{\Delta}(t',x');\ \ \
\underline{u}(t,x)=\liminf_{(t',{x}')\rightarrow(t,x),
\atop \Delta\rightarrow 0}u^{\Delta}(t',x').
$$
We show that $\overline{u}$ is a viscosity subsolution of
(\ref{PDE_1})-(\ref{cc}). A symmetric argument will imply that $\underline{u}$
is a viscosity supersolution of (\ref{PDE_1}), which proves that
$\overline{u}=\underline{u}=u$, so $u^{\Delta}$ converges to $u$
locally uniformly.

Let $\phi\in\mathcal{C}^{\infty}(\bar{Q}_T)$ and $(t_0,x_0)\in Q_T$
be such that
$$0=(\overline{u}-\phi)(t_0,x_0)=\max_{(t',x')}
(\overline{u}-\phi)(t',x').$$ By the definition of $\overline{u}$,
there exists a sequence $\{(t_n,x_n,\Delta_{n})\}_{n\geq 1}$ such that
$$(t_n,x_n,\Delta_{n})\rightarrow(t_0,x_0,0),\ \ \text{and}\ \ u^{\Delta_{n}}(t_n,x_n)\rightarrow\overline{u}(t_0,x_0).$$
Moreover, by extracting a subsequence if necessary, $(t_n,x_n)$ is
also the maximum point of $u^{\Delta_{n}}-\phi$:
$$\delta^{\Delta_{n}}:=(u^{\Delta_{n}}-\phi)(t_n,x_n)=\max_{(t',x')}
(u^{\Delta_{n}}-\phi)(t',x')\rightarrow 0.$$
Since $t_{0}>0$ and $\Delta\rightarrow 0$, we have $t_{n}>\Delta_{n}$ for large enough $n$. The monotonicity property (i) in Proposition
\ref{scheme property} further implies that
\begin{align*}
0&=S(\Delta_{n},x_{n},u^{\Delta_{n}}(t_{n},x_{n}),u^{\Delta_{n}}(t_{n}-\Delta_{n},\cdot))\\
  \ge&\ S(\Delta_{n},x_{n},\phi(t_{n},x_{n})+\delta^{\Delta_{n}},\phi(t_{n}-\Delta_{n},\cdot)+\delta^{\Delta_{n}})\\
  =&\ \frac{\phi(t_{n},x_{n})-\mathbf{S}(\Delta_{n})\phi(t_{n}-\Delta_{n},\cdot)(x_{n})}{\Delta_{n}}.
\end{align*}
In turn, using the consistency property (iii) in Proposition  \ref{scheme property} and letting
$(t_n,x_n,\Delta)\rightarrow(t_0,x_0,0)$, we obtain
$$\partial_t\phi(t_0,x_0)-
G(D_x\phi(t_0,x_0),D_{x}^2\phi(t_0,x_0))\leq 0.$$

Next, we show that $\overline{u}(0,x)=\phi(x)$ for $x\in\mathbb{R}^d$. Let $\{(t_n,x_n, \Delta_n)\}_{n\geq 1}$ be a sequence such that
$$(t_n,x_n,\Delta_{n})\rightarrow(0,x,0),\ \ \text{and}\ \ u^{\Delta_{n}}(t_n,x_n)\rightarrow\overline{u}(0,x).$$
Since $u^{\Delta_n}(s,x_n)=\phi(x_n)$ for $s\in\bar{Q}_{T}\backslash \bar{Q}_{T}^{\Delta_n}$,
by the time regularity of $u^{\Delta}$ in Lemma \ref{regularity1}(ii), we have
$$|u^{\Delta_n}(t_n,x_n)-u^{\Delta_n}(s,x_n)|\le \sqrt{3}C_\phi K_0(|t_n-s|^{\beta/2}+\Delta_n^{\beta/2}).$$
{Letting $s=0$ and} sending $n\rightarrow\infty$ {yield} that $|\bar{u}(0,x)-\phi(x)|=0$, from which we conclude that
$\overline{u}(\cdot,\cdot)$ is a viscosity subsolution of
(\ref{PDE_1})-(\ref{cc}).


\section{Convergence rate of the monotone approximation  scheme}\label{sec-main}

In this section, we prove Theorem \ref{maintheorem}(ii) by
establishing the (uniform) convergence rate of the approximate solution
$u^{\Delta}$ to the viscosity solution $u$, and keeping track of all the involved constants.
We start with the approximation error in the first time interval $\bar{Q}_{T}\backslash {Q}_{T}^\Delta$, where $u^{\Delta}=\phi = u|_{t=0}$ except at $t=\Delta$. Therefore, the bound for the approximation error in this interval can be easily obtained by the regularity property of $u$ in Lemmas  \ref{regularity}. This is demonstrated in the following lemma.

\begin{lemma}\label{errorsmall}
Suppose that Assumption \ref{assumption1} is satisfied.
Then, for $\Delta\in(0,1)$,
\begin{equation}\label{estimate_final_interval}
|u-u^{\Delta}|\leq 2C_{\phi}K_0{\Delta}^{\beta/2}\ in\ \bar{Q}_{T}\backslash {Q}_{T}^\Delta,
\end{equation}
where the constant $K_0$ is given in (\ref{constant_K_0}).
\end{lemma}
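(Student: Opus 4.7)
The set $\bar{Q}_T\setminus Q_T^\Delta$ is exactly $[0,\Delta]\times\mathbb{R}^d$, and on this set $u^\Delta$ takes only two forms: it equals $\phi(x)=u(0,x)$ on the slab $t\in[0,\Delta)$, and at the single instant $t=\Delta$ it jumps to $\sube[\phi(x+\sqrt{\Delta}X+\Delta Y)]$. The plan is therefore to dispose of the two cases separately and combine them with the triangle inequality.

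\emph{Case 1: $t\in[0,\Delta)$.} Here $u^\Delta(t,x)=\phi(x)=u(0,x)$, so
$|u(t,x)-u^\Delta(t,x)|=|u(t,x)-u(0,x)|$, and Lemma \ref{regularity}(ii) gives immediately
$|u(t,x)-u(0,x)|\leq C_\phi K_0 t^{\beta/2}\leq C_\phi K_0\Delta^{\beta/2}$.

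\emph{Case 2: $t=\Delta$.} Insert $\phi(x)$ and split:
$|u(\Delta,x)-u^\Delta(\Delta,x)|\leq |u(\Delta,x)-\phi(x)|+|\phi(x)-u^\Delta(\Delta,x)|$. The first term is bounded by $C_\phi K_0\Delta^{\beta/2}$ again by Lemma \ref{regularity}(ii). For the second term, sub-additivity and constant preservation of $\sube$ combined with the $\beta$-Hölder continuity of $\phi$ yield
\[
|\phi(x)-\sube[\phi(x+\sqrt{\Delta}X+\Delta Y)]|\leq \sube[|\phi(x+\sqrt{\Delta}X+\Delta Y)-\phi(x)|]\leq C_\phi\sube\bigl[|\sqrt{\Delta}X+\Delta Y|^\beta\bigr].
\]
Using $(a+b)^\beta\leq a^\beta+b^\beta$ for $\beta\in(0,1]$ and the elementary bound $\sube[|X|^\beta]\leq(\sube[|X|^2])^{\beta/2}=(M_X^2)^{\beta/2}$ (and similarly for $Y$), this is at most $C_\phi(\Delta^{\beta/2}(M_X^2)^{\beta/2}+\Delta^\beta(M_Y^2)^{\beta/2})$. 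Since $\Delta<1$ gives $\Delta^\beta\leq\Delta^{\beta/2}$, and since $K_0=e^{\beta T/2}[(M_X^2)^{\beta/2}+(M_Y^2)^{\beta/2}]\geq (M_X^2)^{\beta/2}+(M_Y^2)^{\beta/2}$, this second term is also bounded by $C_\phi K_0\Delta^{\beta/2}$.

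Adding the two contributions at $t=\Delta$ yields $2C_\phi K_0\Delta^{\beta/2}$, which dominates the Case 1 bound as well, giving the claimed estimate uniformly on $\bar{Q}_T\setminus Q_T^\Delta$. There is no genuine obstacle here; the only point requiring care is that the Hölder-moment estimate $\sube[|X|^\beta]\leq(M_X^2)^{\beta/2}$ (a Jensen/Lyapunov-type inequality) works in the sublinear setting by monotonicity and positive homogeneity of $\sube$, together with the deterministic inequality $|X|^\beta\leq 1+|X|^2$ or, more sharply, the interpolation $|X|^\beta=(|X|^2)^{\beta/2}$ followed by the analogue of Jensen's inequality for concave functions under $\sube$.
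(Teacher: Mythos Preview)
Your proof is correct and follows essentially the same approach as the paper: the paper likewise separates the slab $[0,\Delta)$ (where $u^\Delta=\phi=u(0,\cdot)$ and Lemma~\ref{regularity}(ii) applies) from the instant $t=\Delta$, where it inserts $\phi(x)=u^\Delta(0,x)$ and bounds $|u^\Delta(\Delta,x)-\phi(x)|$ by $\sube[C_\phi|\sqrt{\Delta}X+\Delta Y|^\beta]$ via the same sub-additivity, H\"older, and moment-interpolation arguments you use. The only cosmetic difference is that the paper writes the decomposition as a single line with an indicator $\mathbf{1}_{\{t=\Delta\}}$ rather than two explicit cases.
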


\begin{proof}
Since $u^{\Delta}=\phi = u|_{t=0}$ in $\bar{Q}_{T}\backslash \bar{Q}_{T}^\Delta$, we have, for $(t,x)\in\bar{Q}_{T}\backslash {Q}_{T}^\Delta$,
$$|u(t,x)-u^\Delta(t,x)|\le |u(t,x)-u(0,x)|+|u^\Delta(\Delta,x)-u^\Delta(0,x)|\mathbf{1}_{\{t=\Delta\}}.$$
When $t=\Delta$, we further obtain
\begin{align*}
 |u^\Delta(\Delta,x)-u^\Delta(0,x)| \le
 &\
 \sube[|u^\Delta(0,x+\sqrt{\Delta}X+\Delta Y)-u^\Delta(0,x)|] \\
 =&\ \sube[|\phi(x+\sqrt{\Delta}X+\Delta Y)-\phi(x)|] \\
 \le &\
 \sube[C_\phi |\sqrt{\Delta}X+\Delta Y|^\beta] \\
 \le &\
 C_\phi(M_X^\beta+\Delta^{\beta/2}M_Y^\beta)\Delta^{\beta/2} \\
  \le &\
 C_\phi((M_X^2)^{\beta/2}+(M_Y^1)^\beta)\Delta^{\beta/2}\leq C_{\phi}K_0\Delta^{\beta/2}.
\end{align*}
The conclusion then follows from Lemma \ref{regularity}(ii).
\end{proof}


\subsection{Lower bound for the approximation {error}}\label{upbd}

For $u\in\mathcal{C}_{lb}^{\frac{\beta}{2},\beta}(\bar{Q}_T)$, we aim to derive a lower bound for the approximation error $u-u^{\Delta}$ within the whole domain $\bar{Q}_{T}$.
To this end, for $\varepsilon\in(0,1)$, we extend the domain of the G-equation (\ref{PDE_1}) from $Q_T$ to
${Q}_{T+\varepsilon^{2}}:=(0,T+\varepsilon^{2}]\times\mathbb{R}^{d}$
and still denote the solution as $u$.
Next, we  regularize $u$ by
a standard mollification procedure: let $\rho(t,x)$ be a nonnegative
smooth function with support in $(-1,0)\times B(0,1)$
and mass $1$, and introduce the sequence of mollifiers
$\rho_{\varepsilon}$ for $\varepsilon\in(0,1)$,
\begin{equation}\label{mollifer}
\rho_{\varepsilon}(t,x):=\frac{1}{\varepsilon^{2+d}}\rho\left(\frac{t}{\varepsilon^2},\frac{x}{\varepsilon}\right).
\end{equation}
For $(t,x)\in \bar{Q}_T$, we then define
$$u_{\varepsilon}(t,x)=u*
\rho_{\varepsilon}(t,x)=\int_{-\varepsilon^2< \tau< 0}\int_{|e|<
\varepsilon}u(t-\tau,x-e)\rho_{\varepsilon}(\tau,e)ded\tau.$$
Lemma \ref{regularity} implies that  
$$|u(t,x)-u(s,y)|\le C_\phi\left[|x-y|^\beta+K_0|s-t|^{\beta/2}\right].$$
In turn, standard properties of mollifiers (see C.4 in \cite{Evans}) imply that
$u_{\varepsilon}\in\mathcal{C}_{lb}^{\infty}(\bar{Q}_{T})$,
\begin{equation}\label{lowerbound_2}
|u-u_{\varepsilon}|_0\leq C_{\phi}(1+K_0)\varepsilon^\beta,
\end{equation}
and, moreover, for positive integer $i$ and multiindex $j$,
\begin{equation}\label{mollifier}
|\partial_{t}^iD_{x}^ju_{\varepsilon}|_0\leq C_{\phi}(1+K_0)\varepsilon^{\beta-2i-|j|}||\partial_{t}^iD_{x}^j\rho||_1,
\end{equation}
where the constant $K_0$ is given in (\ref{constant_K_0}) and
\textcolor[rgb]{1.00,0.00,0.00}{$$||\partial_{t}^iD_{x}^j\rho||_1=\int_{(-1,0)}\int_{B(0,1)}\left|\partial_{t}^iD_{x}^j\rho(\tau,e)\right|d(\tau,e)<\infty.$$}

We observe that the function $u(t-\tau,x-e)$ is still a viscosity solution of the G-equation (\ref{PDE_1})
in ${Q}_{T}$ for any
{$(\tau,e)\in(-\varepsilon^2,0)\times B(0,\varepsilon)$}. On the other hand, a Riemann sum approximation shows that there exists a sequence $\{I_n\}_{n\ge1}\in\mathcal{C}_{lb}(\bar{Q}_T)$ such that each $I_n$ is a convex combination of the functions $u(\cdot-\tau,\cdot-e)$ for different $(\tau,e)\in(-\varepsilon^2,0)\times B(0,\varepsilon)$ and that $I_n$ converges uniformly to $u_\varepsilon$. Since the nonlinear term
$G(p,X)$ is convex in $p$ and
$X$, each $I_n$ becomes a supersolution of (\ref{PDE_1}) in ${Q}_{T}$. Using the stability of viscosity solutions, we deduce that $u_{\varepsilon}(t,x)$ is still a supersolution
of (\ref{PDE_1}) in $Q_T$, namely,
\begin{equation}\label{sub2}
\partial_{t}u_{\varepsilon}-G(D_{x}u_{\varepsilon},D_{x}^2u_{\varepsilon})\ge 0.
\end{equation}
%
%
%

We are now in a position to establish a lower bound for the approximation error.

\begin{theorem}\label{theorem_error_1}
Suppose that Assumption \ref{assumption1} is  satisfied. Then, for $\Delta\in(0,1)$, there exists a constant $C_{LB}$ depending only on $T$, $C_{\phi}$, $\beta$, $M_X^{3}$ and $M_Y^2$ such that
$$u-u^{\Delta}\ge -C_{LB}\Delta^{\beta/6}\ \text{in}\ \bar{Q}_{T}.$$
Moreover, the constant $C_{LB}$ has an explicit formula
$C_{LB}:=C_{\phi}(1+K_0)\left(4+K_1C_\rho T\right)$ with the constants $K_0$, $K_1$ and $C_{\rho}$ given in  (\ref{constant_K_0}),  (\ref{constant_K_1}) and (\ref{C_rho}), respectively.
\end{theorem}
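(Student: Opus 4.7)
The plan is to compare the approximate solution $u^{\Delta}$ against a smooth supersolution built from a mollification of $u$, and then use the scheme's comparison principle to push the consistency error through to a pointwise bound. The mollified function $u_{\varepsilon}$ introduced just before the statement of the theorem is already shown (via the convexity of $G$ in $(p,A)$ and stability of viscosity solutions) to satisfy $\partial_t u_\varepsilon - G(D_x u_\varepsilon, D_x^2 u_\varepsilon) \geq 0$ in $Q_T$, and its derivatives obey the mollifier bounds in \eqref{mollifier}. This will be the ``upper solution'' side of the comparison.

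The core of the argument goes as follows. First, apply the consistency estimate for smooth functions, Proposition \ref{scheme property}(iii)(b), to $u_\varepsilon \in \mathcal{C}_{lb}^{\infty}(\bar Q_T)$. Combined with the supersolution inequality \eqref{sub2}, this gives
\begin{equation*}
S(\Delta, x, u_\varepsilon, u_\varepsilon(t-\Delta, \cdot)) \;\geq\; -K_1\bigl(\sqrt{\Delta}(|D_x^3 u_\varepsilon|_0 + |D_x^2 u_\varepsilon|_0) + \Delta(|\partial_t^2 u_\varepsilon|_0 + |\partial_t D_x^2 u_\varepsilon|_0 + |\partial_t D_x u_\varepsilon|_0)\bigr)
\end{equation*}
in $Q_T^\Delta$. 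Substituting the bounds \eqref{mollifier}, every derivative contributes a factor of $\varepsilon^{\beta - 2i - |j|}$, so the dominant term in the resulting right-hand side is of order $\sqrt{\Delta}\,\varepsilon^{\beta - 3}$, with subleading $\Delta\,\varepsilon^{\beta - 4}$ pieces. All prefactors collect into $C_\phi (1+K_0) K_1 C_\rho$ for a mollifier-dependent constant $C_\rho$ built from the norms $\|\partial_t^i D_x^j \rho\|_1$.

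Next, apply the scheme comparison principle (Proposition \ref{schemecomparison}) with $\underline v = u^\Delta$ (which satisfies $S(\Delta,x,u^\Delta,u^\Delta(t-\Delta,\cdot)) = 0$) and $\bar v = u_\varepsilon$. On the initial layer $\bar Q_T \setminus Q_T^\Delta$, bound $(u^\Delta - u_\varepsilon)^+$ using the triangle inequality together with Lemma \ref{errorsmall} (giving $2C_\phi K_0 \Delta^{\beta/2}$) and the mollification estimate \eqref{lowerbound_2} (giving $C_\phi(1+K_0)\varepsilon^\beta$). This yields
\begin{equation*}
u^\Delta - u_\varepsilon \;\leq\; 2C_\phi K_0 \Delta^{\beta/2} + C_\phi(1+K_0)\varepsilon^\beta + T K_1 C_\phi(1+K_0) C_\rho \bigl(\sqrt{\Delta}\,\varepsilon^{\beta-3} + \Delta\,\varepsilon^{\beta - 4}\bigr) \quad \text{in } \bar Q_T.
\end{equation*}
Adding $|u_\varepsilon - u| \leq C_\phi(1+K_0)\varepsilon^\beta$ converts this into an upper bound on $u^\Delta - u$, equivalently a lower bound on $u - u^\Delta$.

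Finally, optimize the balance $\varepsilon^\beta$ vs. $\sqrt{\Delta}\,\varepsilon^{\beta-3}$. The choice $\varepsilon = \Delta^{1/6}$ equalizes these two terms at $\Delta^{\beta/6}$; meanwhile $\Delta\,\varepsilon^{\beta - 4} = \Delta^{(2+\beta)/6}$ and $\Delta^{\beta/2}$ are both of lower order than $\Delta^{\beta/6}$ for $\Delta \in (0,1)$ and $\beta \in (0,1]$, and can be absorbed into the constant. Collecting all prefactors produces exactly the explicit form $C_{LB} = C_\phi(1+K_0)(4 + K_1 C_\rho T)$. I expect the only real technical nuisance to be the bookkeeping of universal constants in the consistency bound applied to $u_\varepsilon$ (so that the final $4 + K_1 C_\rho T$ shape is reached cleanly); everything else — the supersolution property of $u_\varepsilon$, the comparison step, and the $\varepsilon = \Delta^{1/6}$ optimization — is essentially mechanical once the preceding lemmas are in hand.
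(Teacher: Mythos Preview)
Your proposal is correct and follows essentially the same approach as the paper: mollify $u$ to a smooth supersolution $u_\varepsilon$, plug into the consistency estimate \eqref{consistant_error} combined with \eqref{sub2} and \eqref{mollifier}, apply the scheme comparison principle (Proposition \ref{schemecomparison}) against $u^\Delta$, handle the initial layer via Lemma \ref{errorsmall} and \eqref{lowerbound_2}, and then set $\varepsilon=\Delta^{1/6}$. The paper's proof packages the consistency right-hand side into an auxiliary quantity $c(\beta,\varepsilon)$ and tracks the constants slightly differently, but the structure and the resulting bound $C_{LB}=C_\phi(1+K_0)(4+K_1C_\rho T)$ are the same.
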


\begin{proof}
Since $u_{\varepsilon}\in\mathcal{C}_{lb}^{\infty}(\bar{Q}_{T})$ is smooth with bounded derivatives of any order, we substitute $u_{\varepsilon}$ into the consistency error estimate
(\ref{consistant_error}) and use (\ref{sub2}) and (\ref{mollifier}) to obtain
\begin{align}\label{estimate4}
&\ {S}(\Delta,x,u_{\varepsilon}(t,x),u_{\varepsilon}(t-\Delta,\cdot)) \nonumber \\
 \ge &\
-C_{\phi}(1+K_0)K_1\nonumber\\
&\ \times\left[\sqrt{\Delta}(\varepsilon^{\beta-3}||D^3_x\rho||_1+\varepsilon^{\beta-2}||D^2_x\rho||_1)+\Delta(\varepsilon^{\beta-4}(||\partial^2_t\rho||_1+||\partial_tD^2_x\rho||_1)+\varepsilon^{\beta-3}||\partial_tD_x\rho||_1)\right] \nonumber\\
\ge  &\ -C_{\phi}(1+K_0)K_1\nonumber\\
&\ \times\left[\sqrt{\Delta}\varepsilon^{\beta-3}(||D^3_x\rho||_1+||D^2_x\rho||_1)+\Delta\varepsilon^{\beta-4}(||\partial^2_t\rho||_1+||\partial_tD^2_x\rho||_1+||\partial_tD_x\rho||_1)\right]\nonumber\\
=: &\ -C_{\phi}(1+K_0)K_1c(\beta,\varepsilon),
\end{align}
for $(t,x)\in{{Q}_{T}^\Delta}$, where the constants $K_0$ and $K_1$ are given in (\ref{constant_K_0}) and (\ref{consistant_error}), respectively.
The comparison principle
in Proposition \ref{schemecomparison} then implies that in $\bar{Q}_T$,
$$u^{\Delta}-u_{\varepsilon}\leq \sup_{\bar{Q}_{T}\backslash {{Q}_{T}^\Delta}}(u^{\Delta}-u_{\varepsilon})^{+}+c(\beta,\varepsilon)TC_{\phi}(1+K_0)K_1.$$
Next, using (\ref{lowerbound_2}), we
further obtain
\begin{align*}
u^{\Delta}-u &=(u_{\varepsilon}-u)+(u^{\Delta}-u_{\varepsilon})\\
&\leq C_{\phi}(1+K_0)\varepsilon^\beta+\sup_{\bar{Q}_{T}\backslash
{{Q}_{T}^\Delta}}(u^{\Delta}-u_{\varepsilon})^{+}+c(\beta,\varepsilon)TC_{\phi}(1+K_0)K_1\\
&\leq \sup_{\bar{Q}_{T}\backslash
{{Q}_{T}^\Delta}}(u-u^{\Delta})^{+}+2C_{\phi}(1+K_0)\varepsilon^\beta+c(\beta,\varepsilon)TC_{\phi}(1+K_0)K_1 \
\text{in}\ \bar{Q}_T.
\end{align*}
By choosing $\varepsilon=\Delta^{1/6}$, we conclude that
\begin{align*}
u^{\Delta}-u&\ \leq \sup_{\bar{Q}_{T}\backslash {{Q}_{T}^\Delta}}(u-u^{\Delta})^{+}+2C_{\phi}(1+K_0)\Delta^{\beta/6}+c(\beta,\Delta^{1/6})TC_{\phi}(1+K_0)K_1 \\
&\ \le C_{\phi}(1+K_0)\left(4+K_1C_\rho T\right)\Delta^{\beta/6}\ \ \text{in}\ \bar{Q}_T,
\end{align*}
where the last inequality follows from the estimate
(\ref{estimate_final_interval}) in Lemma \ref{errorsmall} and the fact that $c(\beta,\Delta^{1/6})\leq C_{\rho}\Delta^{\beta/6}$ with
\begin{equation}\label{C_rho}
C_\rho: = ||D^3_x\rho||_1+||D^2_x\rho||_1+||\partial^2_t\rho||_1+||\partial_tD^2_x\rho||_1+||\partial_tD_x\rho||_1<\infty.
\end{equation}
\end{proof}
\begin{remark}
A typical example of $\rho$ is given by
$$\rho(t,x)=K\exp(-\frac{1}{1-|x|^2})\exp(-\frac{1}{1-(2t+1)^2})\mathbf{1}_{\{|x|<1,-1<t<0\}},$$
where $K$ is given such that the mass of $\rho$ is 1. One can always compute $C_\rho$ using this example and in the one-dimension case, $C_\rho<10^3e^{-1}$. In turn, when $T=1$, it follows from the formulae for $K_0$ and $K_1$ (c.f. (\ref{constant_K_0}) and (\ref{constant_K_1})) that
\begin{align*}
C_{LB}&\leq C_{\phi}\left[1+e^{\frac{\beta}{2}}((M_X^2)^{\frac{\beta}{2}}+(M_Y^2)^{\frac{\beta}{2}})\right]\left[4+(1+M_Y^1+M_Y^2+M_X^2+M_X^{3})\frac{10^3}{e} \right]\\
&\leq 613C_{\phi}\left[1+(M_X^3)^{\frac{\beta}{3}}+(M_Y^2)^{\frac{\beta}{2}}\right]
\left[1+(M_X^3)^{\frac23}+M_X^3+(M_Y^2)^{\frac12}+M_Y^2\right].
\end{align*}
\end{remark}

\subsection{Upper bound for the approximation {error}}\label{lowbd}

To obtain an upper bound for the approximation error, we are not able to construct approximate smooth subsolutions {of} (\ref{PDE_1}) due to the
convexity of the function $G$. Instead, we interchange the
roles of the G-equation (\ref{PDE_1}) and the monotone scheme
(\ref{semischeme}) (as in \cite{HL1} and \cite{Jakobsen}).

To this end, for $\varepsilon\in(0,1)$, we extend the domain of the monotone scheme (\ref{semischeme}) from $\bar{Q}_T$ to
$\bar{Q}_{T+\varepsilon^{2}}:=[0,T+\varepsilon^{2}]\times\mathbb{R}^{d}$
and still denote the scheme solution as $u^\Delta$.
Then, using the same mollifiers $\rho_\varepsilon$ as in section \ref{upbd}, we define, for $(t,x)\in\bar{Q}_T$,
$$u_{\varepsilon}^\Delta(t,x)=u^\Delta*
\rho_{\varepsilon}(t,x)=\int_{-\varepsilon^2< \tau< 0}\int_{|e|<
\varepsilon}u^\Delta(t-\tau,x-e)\rho_{\varepsilon}(\tau,e)ded\tau.$$
The regularity property of $u^\Delta$ in Lemma \ref{regularity1} implies that
$$|u^{\Delta}(t,x)-u^{\Delta}(s,y)|\leq C_{\phi}\left[|x-y|^{\beta}+\sqrt{3}K_0(|s-t|^{\beta/2}+\Delta^{\beta/2})\right].$$
In turn, standard properties of mollifiers imply that $u_{\varepsilon}^\Delta\in\mathcal{C}_{lb}^{\infty}(\bar{Q}_{T})$,
\begin{equation}\label{upperbound_2}
|u^\Delta-u^\Delta_{\varepsilon}|_0\leq C_\phi(1+\sqrt{3}K_0)(\varepsilon^\beta+\Delta^{\beta/2}),
\end{equation}
and, moreover, for positive integer $i$ and multiindex $j$,
\begin{equation}\label{mollifier1}
|\partial_{t}^iD_{x}^ju^\Delta_{\varepsilon}|_0\leq C_\phi(1+\sqrt{3}K_0)\varepsilon^{-2i-|j|}(\varepsilon^\beta+\Delta^{\beta/2})||\partial_{t}^iD_{x}^j\rho||_1,
\end{equation}
where the constant $K_0$ is given in (\ref{constant_K_0}) and
\textcolor[rgb]{1.00,0.00,0.00}{$$||\partial_{t}^iD_{x}^j\rho||_1=\int_{(-1,0)}\int_{B(0,1)}\left|\partial_{t}^iD_{x}^j\rho(\tau,e)\right|d(\tau,e)<\infty.$$}

Next, let $\{I_n^\Delta\}_{n\ge1}\in\mathcal{C}_{lb}(\bar{Q}_T)$ be a sequence such that each $I_n^\Delta$ is a convex combination of the functions $u^\Delta(\cdot-\tau,\cdot-e)$ for different $(\tau,e)\in(-\varepsilon^2,0)\times B(0,\varepsilon)$ and that $I_n^\Delta$ converges uniformly to $u_\varepsilon^\Delta$.
Since $$S(\Delta,x,u^\Delta(t-\tau,x-e),u^\Delta(t-\tau-\Delta,\cdot-e))=0\ \text{in}\ \bar{Q}_T^\Delta,$$ for any
{$(\tau,e)\in(-\varepsilon^2,0)\times B(0,\varepsilon)$}, the concavity of the monotone scheme (cf. Proposition \ref{scheme property} (ii)) yields that for any $n\in\mathbb{N}$ and $(t,x)\in\bar{Q}_T^\Delta$,
$$S(\Delta, x, I_n^\Delta(t,x),I_n^\Delta(t-\Delta,\cdot))\ge 0.$$
Since $I_n^\Delta$ is lower bounded, we use Fatou's property of the sublinear expectation $\sube$ (see (\ref{Fatou})) to deduce that, for $(t,x)\in\bar{Q}_T^\Delta$,
\begin{align}\label{super2}
&\ S(\Delta, x, u_{\varepsilon}^\Delta(t,x),u_{\varepsilon}^\Delta(t-\Delta,\cdot)) \nonumber \\
 = &\
 \left(u_{\varepsilon}^\Delta(t,x)-\sube[u_{\varepsilon}^\Delta(t-\Delta,x+\sqrt{\Delta}X+\Delta Y)]\right)\Delta^{-1} \nonumber \\
 \ge &\
  \left(u_{\varepsilon}^\Delta(t,x)-\lim_{n\to\infty}\sube[I_n^\Delta(t-\Delta,x+\sqrt{\Delta}X+\Delta Y)]\right)\Delta^{-1} \nonumber \\
  = &\
  \lim_{n\to\infty}S(\Delta, x, I_n^\Delta(t,x),I_n^\Delta(t-\Delta,\cdot))\ge 0.
\end{align}

We are now in a position to establish an upper bound for the approximation error.

\begin{theorem}\label{theorem_error_111}
\textcolor[rgb]{1.00,0.00,0.00}{Suppose that Assumption \ref{assumption1} is satisfied.} Then, for $\Delta\in(0,1)$, there exists a constant $C_{UB}$ depending only on $T$, $C_{\phi}$, $\beta$, $M_X^{3}$ and $M_Y^2$ such that
$$u-u^{\Delta}\le C_{UB}\Delta^{\beta/6}\ \text{in}\ \bar{Q}_{T}.$$
Moreover, the constant $C_{UB}$ has an explicit formula
$C_{UB}:=2\sqrt{3}C_{LB}=2\sqrt{3}C_{\phi}(1+K_0)\left(4+K_1C_\rho T\right)$ with the constants $K_0$, $K_1$ and $C_{\rho}$ given in  (\ref{constant_K_0}),  (\ref{consistant_error}) and (\ref{C_rho}), respectively.
\end{theorem}

\begin{proof}
We first consider the above error estimate in $Q^\Delta_T$. Since $u_{\varepsilon}^\Delta\in\mathcal{C}_{lb}^{\infty}(\bar{Q}_{T})$ is smooth with bounded derivatives of any order, we substitute $u_{\varepsilon}^\Delta$ into the consistency error estimate
(\ref{consistant_error}) and use (\ref{super2}) and  (\ref{mollifier1}) to obtain
\begin{align*}\label{estimate44}
\partial_{t}u_{\varepsilon}^\Delta-G(D_{x}u_{\varepsilon}^\Delta,D_{x}^2u_{\varepsilon}^\Delta)
 \ge
-C_{\phi}(1+\sqrt{3}K_0)K_1(\varepsilon^\beta+\Delta^{\beta/2})c(0,\varepsilon)
\end{align*}
for $(t,x)\in{{Q}_{T}^\Delta}$, where $c(0,\varepsilon)$ is defined in (\ref{estimate4}).
Then, the function $$\bar{v}(t,x):=u_{\varepsilon}^\Delta(t,x)+C_{\phi}(1+\sqrt{3}K_0)K_1(\varepsilon^\beta+\Delta^{\beta/2})c(0,\varepsilon)(t-\Delta)$$ becomes a (classical) supersolution of the G-equation (\ref{PDE_1}) {in $Q_T^\Delta$} with {initial condition $\bar{v}(\Delta,x)=u_{\varepsilon}^\Delta(\Delta,x)$}. On the other hand, from (\ref{upperbound_2}) and (\ref{errorsmall}), we know that $$\underline{v}(t,x):=u(t,x)-C_\phi(1+\sqrt{3}K_0)(\varepsilon^\beta+\Delta^{\beta/2})-
2C_{\phi}K_0{\Delta}^{\beta/2}$$ is a (viscosity) solution of the G-equation (\ref{PDE_1}), and from  (\ref{estimate_final_interval}) and (\ref{upperbound_2}), we further have
\begin{align*}
\underline{v}(\Delta,x)=&\ u(\Delta,x)-C_\phi(1+\sqrt{3}K_0)(\varepsilon^\beta+\Delta^{\beta/2})-
2C_{\phi}K_0{\Delta}^{\beta/2}\\
=&\ u(\Delta,x)-u^{\Delta}(\Delta,x)+u^{\Delta}(\Delta,x)-u_{\varepsilon}^\Delta(\Delta,x)
+u_{\varepsilon}^\Delta(\Delta,x)\\
&-
2C_{\phi}K_0{\Delta}^{\beta/2}-C_\phi(1+\sqrt{3}K_0)(\varepsilon^\beta+\Delta^{\beta/2})\leq
u_{\varepsilon}^\Delta(\Delta,x)=\bar{v}(\Delta,x).
\end{align*}
Thus, the comparison principle for the G-equation (see Theorem 6.3 in \cite{Peng2008}) implies that $\underline{v}\le\bar{v}$ in {$\bar{Q}^\Delta_T$}, i.e.
$$u-u_{\varepsilon}^\Delta\leq
C_\phi(1+\sqrt{3}K_0)(\varepsilon^\beta+\Delta^{\beta/2})+
2C_{\phi}K_0{\Delta}^{\beta/2}
+C_{\phi}(1+\sqrt{3}K_0)K_1(\varepsilon^\beta+\Delta^{\beta/2})c(0,\varepsilon)(t-\Delta)\ \text{in}\ {\bar{Q}^\Delta_T}.$$
Finally, using the estimates (\ref{upperbound_2}) again, we obtain by choosing $\varepsilon=\Delta^{1/6}$ that
\begin{align*}
u-u^{\Delta} &=(u-u_{\varepsilon}^\Delta)+(u_{\varepsilon}^\Delta-u^{\Delta})\\
&\leq 4C_\phi(1+\sqrt{3}K_0)\Delta^{\beta/6}+2C_{\phi}K_0{\Delta}^{\beta/6}
+2C_{\phi}(1+\sqrt{3}K_0)K_1C_{\rho}T\Delta^{\beta/6}\
\text{in}\ {\bar{Q}^\Delta_T},
\end{align*}
where we used the fact that $c(0,\Delta^{1/6})\leq C_{\rho}$. The conclusion then follows by combining the above estimate with (\ref{estimate_final_interval}).
\end{proof}

\subsection{The non-degenerate case}\label{sec-special}


We prove part(iii) in Theorem \ref{maintheorem}.  When the  non-degeneracy assumption and more regularity on the initial data $\phi$ are
imposed as in part(iii), the solution $u$ of (\ref{PDE_1})-(\ref{cc}) becomes a classical solution with enough regularity. This will significantly simplify the previous proof for the general case with possible degeneracy.

%


First, the monotonicity property of \textcolor[rgb]{1.00,0.00,0.00}{$\widetilde{\mathbb{E}}$}, the boundedness of $\phi$ and (\ref{solution}) yield that $u$ is bounded. Lemma \ref{regularity} further implies that $u\in\mathcal{C}^{{1}/{2},1}_b(\bar{Q}_T)$. In turn, the regularity theory of fully nonlinear PDEs implies the H\"older continuity of the derivatives of $u$, i.e.  there exists a constant $\alpha\in(0,1)$ {depending only on $d$, $\underline{\sigma}^2$ and $M_X^2$} such that   $u\in\mathcal{C}_b^{1+\frac{\alpha}{2},2+\alpha}(\bar{Q}^{\varepsilon}_{T})$
for any $\varepsilon>0$ (see Theorem 4.5 in Appendix C of \cite{Peng2010}, or  \cite{Krylov3} and \cite{Wang} for more details). The consistency error
estimate (\ref{consistant_error11}) then yields
\begin{align*}
&|{S}(\Delta,x,u(t,x),u(t-\Delta,\cdot))|\\
\leq&\
K_{\alpha}\left(\Delta^{\alpha/2}\left([D_x^2u]_{\mathcal{C}^{\alpha/2,\alpha}}+[\partial_{t}u]_{{\mathcal{C}^{\alpha/2,\alpha}}}\right)+\sqrt{\Delta}|D_x^2u|_0+\Delta\left(|\partial_{t}D_x^2u|_0+|\partial_{t}D_xu|_{0}\right)\right)\leq C\Delta^{\alpha/2},
\end{align*}
 for
$(t,x)\in{Q}_{T}^\Delta$ and some constant $C$.
 On the other hand, since
 $${S}(\Delta,x,u^{\Delta}(t,x),u^{\Delta}(t-\Delta,\cdot))=0,$$
the comparison principle
in Proposition \ref{schemecomparison} implies
\begin{equation}\label{non-degenerate-case}
|u-u^{\Delta}|\leq \sup_{\bar{Q}_T\backslash{Q}_{T}^\Delta}|u-u^{\Delta}|+Ct\Delta^{\alpha/2}\ \ \text{in}\ \bar{Q}_T.
\end{equation}
Since Assumption \ref{assumption1}(i) holds with $\beta=1$, it follows from Lemma \ref{errorsmall}
that
$$\sup_{\bar{Q}_T\backslash{Q}_{T}^\Delta}|u-u^{\Delta}|\leq 2C_{\phi}K_0\Delta^{1/2}.$$
The conclusion follows by plugging the above estimate into  (\ref{non-degenerate-case}) and combining with part(ii) in Theorem \ref{maintheorem}.

\begin{remark}
Since there is no explicit formula for the H\"older constant $\alpha$, we are not able to write down the explicit error bound as for the general case with possible degeneracy in part(ii) of Theorem \ref{maintheorem}.

On the other hand, if the solution $u$ has more regularity, say $u\in\mathcal{C}_b^{\infty}({Q}_T)$, then we can replace the consistency error estimate (\ref{consistant_error11}) in the above proof  by (\ref{consistant_error}), and obtain the convergence rate $\Delta^{1/2}$.
\end{remark}

\section{Some special cases}
In this section, we improve the convergence rates in Theorem \ref{Theorem_CLT} by imposing further model assumptions, and compare our results with the existing literature. For the latter use, we state the following property (see Proposition 4.1 in \cite{Peng2008}) of the nonlinear function $G(p,A)$ given by (\ref{functionG}).

\begin{proposition}\label{bddsets}
Let the nonlinear function $G(p,A)$ be given in (\ref{functionG}). Then, there exists a bounded, closed and compact subset $\Theta\subset\rd\times\mathbb{R}^{d\times d}$ such that
$$G(p,A)=\sup_{(q,Q)\in\Theta}\left\{\frac12 tr[AQQ^T]+\langle p,q\rangle\right\},\ (p,A)\in\rd\times\mathbb{S}(d).$$
\end{proposition}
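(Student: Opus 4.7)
The plan is to exploit the fact that $G(p,A)$ inherits the axiomatic structure of $\sube$ and then invoke a standard representation theorem for sublinear functionals on the finite-dimensional space $\mathbb{R}^d \times \mathbb{S}(d)$.

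First, I would verify three properties of $G$. \emph{Sublinearity}: the positive homogeneity and sub-additivity of $\sube$ (axioms (\ref{axiom4}) and (\ref{axiom3})) immediately give $G(\lambda p, \lambda A) = \lambda G(p,A)$ for $\lambda \ge 0$ and $G(p+p', A+A') \le G(p,A) + G(p',A')$. \emph{Monotonicity in $A$}: if $A' - A$ is positive semi-definite, then $\langle (A'-A)X, X\rangle \ge 0$ pointwise, and the monotonicity of $\sube$ (axiom (\ref{axiom1})) gives $G(p,A) \le G(p,A')$. \emph{Domination}: combining Cauchy--Schwarz with sub-additivity and constant preserving yields
\[
G(p,A) \le |p|\,\sube[|Y|] + \tfrac{1}{2}\|A\|\,\sube[|X|^2],
\]
which is finite by Assumption \ref{assumption1}(ii) and makes $G$ Lipschitz on $\mathbb{R}^d \times \mathbb{S}(d)$.

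Next, I would apply the Hahn--Banach theorem (or equivalently compute the subdifferential $\partial G(0,0)$) to produce a non-empty, closed, convex set $\mathcal{A} \subset \mathbb{R}^d \times \mathbb{S}(d)$ such that
\[
G(p,A) = \sup_{(q,S)\in \mathcal{A}} \left\{\langle p,q\rangle + \tfrac{1}{2}\operatorname{tr}(AS)\right\}.
\]
The Lipschitz bound above forces $\mathcal{A}$ to be bounded. To establish $S \succeq 0$ for every $(q,S)\in\mathcal{A}$: for any $A \preceq 0$, monotonicity yields $G(0,A) \le G(0,0) = 0$, while the representation gives $\tfrac{1}{2}\operatorname{tr}(AS) \le G(0,A) \le 0$; since this holds for all $A \preceq 0$, we must have $S \succeq 0$.

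Finally, for each $(q,S) \in \mathcal{A}$ I would factor $S = QQ^T$ by choosing $Q := S^{1/2}$, the symmetric positive semi-definite square root, and define
\[
\Theta := \{(q, S^{1/2}) : (q,S) \in \mathcal{A}\} \subset \mathbb{R}^d \times \mathbb{R}^{d \times d}.
\]
The desired representation follows since $\operatorname{tr}(AS) = \operatorname{tr}(AQQ^T)$. Boundedness of $\Theta$ is inherited from that of $\mathcal{A}$, and closedness follows because the principal square root $S \mapsto S^{1/2}$ is continuous on the cone of positive semi-definite matrices; compactness is then Heine--Borel in finite dimension. The main technical point is the positivity claim $S \succeq 0$; once this is in place, the representation itself is a routine application of Hahn--Banach followed by the factorization.
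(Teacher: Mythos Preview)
The paper does not actually prove this proposition; it simply states it and cites Proposition~4.1 of Peng~\cite{Peng2008} for the proof. Your argument is correct and is essentially the standard route underlying Peng's result: establish that $G$ is a sublinear, monotone (in $A$), and Lipschitz functional on the finite-dimensional space $\mathbb{R}^d\times\mathbb{S}(d)$, apply Hahn--Banach/Fenchel--Moreau to obtain the dual representation over a compact convex set $\mathcal{A}\subset\mathbb{R}^d\times\mathbb{S}(d)$, use monotonicity to force the $\mathbb{S}(d)$-component to be positive semi-definite, and then factor via the square root. One small remark: you might make explicit that the dual pairing on $\mathbb{S}(d)$ identifies the representing matrices $S$ as symmetric (so that $S\succeq 0$ and the square root are well defined), but this is implicit in your choice of domain and does not affect the validity of the argument.
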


\subsection{Law of large numbers: Comparison with \cite{FPSS}}

Assume that $X=0$.
With this extra assumption, we can obtain a better convergence rate by refining the consistency error estimates in Proposition \ref{operator} and Proposition \ref{scheme property}.

\begin{corollary}\label{corollary1} Suppose that Assumption \ref{assumption1} is satisfied with $X=0$ and $\beta=1$, i.e. there is no volatility uncertainty, and the initial data $\phi$ is Lipschitz continuous bounded from below. Then, there exists a constant $C$ depending only on $T$, ${\Theta}$  and $M_Y^2$ such that
$$|u-u^{\Delta}|\leq C\Delta^{\frac12}\ in\ \bar{Q}_T.$$
\end{corollary}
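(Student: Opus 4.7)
The plan is to follow the overall strategy of Theorem \ref{maintheorem} while exploiting the special structure of the first-order case. First I would refine the consistency error estimate in Proposition \ref{operator}(ii): when $X=0$, term $(I)$ in its proof vanishes identically, while term $(II)$ can be re-estimated directly as
$$|D^2\psi|_0\,\sube\!\left[\tfrac12(\Delta Y)^2\right]\le \tfrac12\Delta^2|D^2\psi|_0 M_Y^2,$$
so $\mathcal{E}(\Delta,\psi)\le \tfrac12\Delta|D^2\psi|_0 M_Y^2$ is of order $\Delta$ rather than $\sqrt{\Delta}$. Carrying this improvement through Proposition \ref{scheme property}(iii), and noting that $G$ no longer depends on $D_x^2$ so the term estimating $G(D_x\psi(t),\cdot)-G(D_x\psi(t-\Delta),\cdot)$ also simplifies to $M_Y^1\Delta|\partial_t D_x\psi|_0$, the full consistency error of the monotone scheme becomes of order $\Delta$ times first/second derivatives of $\psi$.

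Next I would upgrade the regularity estimates of Lemmas \ref{regularity} and \ref{regularity1}. Since $X=0$ forces the G-equation to reduce to the first-order Hamilton--Jacobi equation $\partial_tu=G(D_xu)$, with $G$ sublinear and Lipschitz of constant bounded by $M_Y^1\le\sqrt{M_Y^2}$, the representation $u(t,x)=\widetilde{\mathbb{E}}[\phi(x+t\zeta)]$ together with the Lipschitz continuity of $\phi$ immediately yields $|u(t,x)-u(s,y)|\le C_\phi(|x-y|+M_Y^1|t-s|)$. The same argument applied recursively to the scheme (\ref{scheme}) gives $|u^\Delta(t,x)-u^\Delta(s,y)|\le C_\phi(|x-y|+M_Y^1(|t-s|+\Delta))$, so both $u$ and $u^\Delta$ are Lipschitz in $(t,x)$.

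With these improved ingredients I would redo the mollification procedure of Sections \ref{upbd} and \ref{lowbd} using a mollifier with matched time and space scales, namely $\rho_\varepsilon(\tau,e):=\varepsilon^{-(1+d)}\rho(\tau/\varepsilon,e/\varepsilon)$, to fully exploit the Lipschitz-in-time regularity. Standard mollifier calculations yield $|u-u_\varepsilon|_0\le C\varepsilon$ together with $|\partial_t^iD_x^ju_\varepsilon|_0\le C\varepsilon^{1-i-|j|}$ for $i+|j|\ge 1$. Crucially, the Riemann-sum and stability argument used to establish (\ref{sub2}) applies verbatim since it depends only on translation invariance and convexity of $G$ in $(p,A)$ and not on the specific scaling of the mollifier support; hence $u_\varepsilon$ is still a classical supersolution of the first-order G-equation.

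Substituting $u_\varepsilon$ into the sharpened consistency estimate produces total consistency error of order $\Delta/\varepsilon$. Combined with the mollification error $C\varepsilon$ and the comparison principle for the scheme (Proposition \ref{schemecomparison}), this yields $u-u^\Delta\ge -C(\varepsilon+T\Delta/\varepsilon)$. An analogous argument interchanging the roles of $u$ and $u^\Delta$, mollifying $u^\Delta$ with the same matched-scale mollifier and invoking the comparison principle for the G-equation (as in Theorem \ref{theorem_error_111}), yields the symmetric upper bound. Choosing $\varepsilon=\sqrt{\Delta}$ balances the two contributions and produces the claimed $\Delta^{1/2}$ rate, with constant depending only on $T$, $M_Y^2$ and the Lipschitz constant of $G$ encoded in $\Theta$. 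The main subtlety will be checking that all mollifier derivative bounds improve to $\varepsilon^{1-i-|j|}$ (rather than the $\varepsilon^{\beta-2i-|j|}$ used previously); this is where the joint $t,x$ Lipschitz regularity, available only because $X=0$, is used in an essential way.
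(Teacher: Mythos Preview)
Your proposal is correct and follows essentially the same approach as the paper's proof: refine the consistency error to order $\Delta$ using $X=0$, upgrade the regularity of $u$ and $u^\Delta$ to joint Lipschitz in $(t,x)$, switch to a mollifier $\rho_\varepsilon(\tau,e)=\varepsilon^{-(1+d)}\rho(\tau/\varepsilon,e/\varepsilon)$ with matched time and space scales so that $|\partial_t^iD_x^j u_\varepsilon|_0\le C\varepsilon^{1-i-|j|}$, and then rerun the two-sided mollification/comparison argument with $\varepsilon=\sqrt{\Delta}$. The paper's proof proceeds through exactly these steps in the same order.
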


Before proving Corollary \ref{corollary1}, we show its application to the law of large numbers. To this end, let $\Delta=\frac1n$, then by the representation formula (\ref{clt_representation}), we have  $$u^{1/n}(1,0)=\sube[\phi(\sum_{i=1}^n \frac{Y_i}{n})],$$ where $Y_1=Y$, $Y_{i+1} \stackrel{d}{=} Y_i$ and $Y_{i+1}$ is independent of $(Y_1,...,Y_i)$ for each $i=1,...,n-1$.
On the other hand, if we further let $\phi(y):=d_{\Theta}(y)=\inf\{|x-y|:x\in\Theta\},$ where the subset $\Theta\subset\rd$ is given in $G(p,0)$ in Proposition \ref{bddsets}, then $d_{\Theta}(y)\ge 0$ is Lipschitz continuous bounded from below. It follows from Example 4.3 in \cite{Peng2008} that  $$u(1,0)=\sup_{\theta\in\Theta}\phi(\theta)=\sup_{\theta\in\Theta}d_{\Theta}(\theta)=0.$$ In turn, Corollary \ref{corollary1} yields the following form of \emph{law of large numbers}
\begin{equation}\label{LLN}
0\leq\sube[d_{\Theta}(\sum_{i=1}^n \frac{Y_i}{n})]\leq Cn^{-1/2}.
\end{equation}
Note that the above convergence rate is better than the convergence rate $n^{-2/5}$ in Fang et al \cite{FPSS} for the law of large numbers under sublinear expectations (See Remark 2.3 in \cite{FPSS}).

\begin{remark}\footnote{While revising
this work, we came across the working paper \cite{Song2} by Song posted on arXiv in April 2019. He also considers this special case, and obtains the convergence rate $n^{-1/2}$ using the Stein's method and a mollification procedure.} If we choose  any bounded and Lipschitz continuous $\phi\in\mathcal{C}_b^1(\mathbb{R}^d)$ as the test function (which clearly satisfies the assumption in Corollary \ref{corollary1}), then we also obtain the following general form of law of large numbers
\begin{equation}\label{LLN_2}
\left|\sube[\phi(\sum_{i=1}^n \frac{Y_i}{n})]-\sup_{\theta\in\Theta}\phi(\theta)\right|\leq Cn^{-1/2}.
\end{equation}
\end{remark}

We proceed to prove Corollary \ref{corollary1}. Since it is a special case of Theorem \ref{maintheorem}, we only highlight its main steps and differences compared to the proof of Theorem \ref{maintheorem}. Unless otherwise specified, $C$ will represent a generic constant in the following.

\emph{Step 1}. Since $X=0$ and $\beta=1$, a revisit of Lemmas \ref{regularity} and \ref{regularity1} shows that $u$ and $u^{\Delta}$ satisfy
\begin{align*}
  |u(t,x)-u(s,y)| & \leq C_{\phi}[|x-y|+M_Y^1|s-t|]; \\
  |u^{\Delta}(t,x)-u^{\Delta}(s,y)| & \leq C_{\phi}[|x-y|+M_Y^1(|s-t|+\Delta)].
\end{align*}
Thus, from Lemma \ref{estimate_final_interval}, the error bound between $u$ and $u^{\Delta}$ in the interval $[0,\Delta]$ can be refined as
\begin{equation}\label{estimate_final_interval_2}
\sup_{\bar{Q}_{T}\backslash {Q}_{T}^\Delta}|u-u^{\Delta}|\leq C{\Delta}.
\end{equation}

\emph{Step 2}. Next, we refine the consistency error estimates. From Proposition \ref{operator}, since $X=0$, term (I) disappears and $(II)\le\frac12\Delta^2|D^2\psi|_0 M^2_Y$. Thus,  $\mathcal{E}(\Delta,\psi)\leq \ C{\Delta}
|D^2\psi|_0.$ Plugging it into Proposition \ref{scheme property}(iii) yields
\begin{equation}\label{consistant_error22} |\partial_t\psi-G(D_{x}\psi,0)-{S}(\Delta,x,\psi,\psi(t-\Delta,\cdot))|
 \le
 C\Delta\left(|D^2_x\psi|_0+|\partial_{t}^2\psi|_{0}+|\partial_{t}D_x\psi|_{0}\right).
 \end{equation}

\emph{Step 3}. We modify the mollifiers $\rho_{\varepsilon}$ in (\ref{mollifer}) by
\begin{equation}\label{mollifier2}
\rho_{\varepsilon}(t,x):=\frac{1}{\varepsilon^{1+d}}\rho\left(\frac{t}{\varepsilon},\frac{x}{\varepsilon}\right),
\end{equation}
and redefine $u_\varepsilon$ as
$$u_{\varepsilon}(t,x)=u*\rho_{\varepsilon}(t,x)=\int_{-\varepsilon< \tau< 0}\int_{|e|<
\varepsilon}u(t-\tau,x-e)\rho_{\varepsilon}(\tau,e)ded\tau.$$
Thus, the regularity of $u$ implies that $|u-u^{\varepsilon}|\leq C\varepsilon$, and $|\partial_{t}^iD_{x}^ju_{\varepsilon}|_0\leq C\varepsilon^{1-i-|j|}.$

\emph{Step 4}. Substituting $u_{\varepsilon}$ with $\psi$ in the consistency error estimate (\ref{consistant_error22}) and using the fact that $\partial_tu_{\varepsilon}-G(D_{x}u_{\varepsilon},0)\geq 0$ yield   $${S}(\Delta,x,u_{\varepsilon}(t,x),u_{\varepsilon}(t-\Delta,\cdot))\ge -C\Delta\varepsilon^{-1}.$$ Furthermore, choosing $\varepsilon=\Delta^{1/2}$ and following along the similar arguments as in the proof for Theorem \ref{theorem_error_1}, we obtain that
$$u^{\Delta}-u\leq \sup_{\textcolor[rgb]{1.00,0.00,0.00}{\bar{Q}_{T}\backslash {Q}_{T}^\Delta}}(u-u^{\Delta})^{+}+C\Delta^{1/2}\le C\Delta^{1/2},$$
where we used (\ref{estimate_final_interval_2}) in the last inequality.

\emph{Step 5}. To prove the other side inequality, we mollify $u^{\Delta}$ with $\rho_{\varepsilon}(t,x)$ given in (\ref{mollifier2}), i.e. $u_{\varepsilon}^\Delta(t,x)=u^\Delta*
\rho_{\varepsilon}(t,x)$. Then, the regularity of $u^{\Delta}$ implies that
\begin{equation*}
|u^\Delta-u^\Delta_{\varepsilon}|_0\leq C(\varepsilon+\Delta),
\end{equation*}
and
\begin{equation*}
|\partial_{t}^iD_{x}^ju^\Delta_{\varepsilon}|_0\leq C\varepsilon^{-i-|j|}(\varepsilon+\Delta),
\end{equation*}

\emph{Step 6} Substituting $u^{\Delta}_{\varepsilon}$ with $\psi$ in the consistency error estimate (\ref{consistant_error22}) and using the fact that ${S}(\Delta,x,u^{\Delta}_{\varepsilon}(t,x),u^{\Delta}_{\varepsilon}(t-\Delta,\cdot))\geq 0$ yield
\begin{align*}
&\ \partial_{t}u_{\varepsilon}^\Delta-G(D_{x}u_{\varepsilon}^\Delta,0)
\ge -C(\varepsilon+\Delta)\Delta\varepsilon^{-2},
\end{align*}
In turn, $u-u_{\varepsilon}^\Delta\leq C(\varepsilon+\Delta)(1+\Delta\varepsilon^{-2})$, and by choosing $\varepsilon=\Delta^{1/2}$ and following along the similar arguments as in the proof for Theorem \ref{theorem_error_111}, we obtain
$$u-u^{\Delta}=(u-u_{\varepsilon}^\Delta)+(u_{\varepsilon}^\Delta-u^{\Delta})\le C\Delta^{1/2},$$
which is the desired convergence rate.

\subsection{Central limit theorem: Comparison with \cite{Krylov4} and \cite{Song}}
Assume that $Y=0$, we obtain the central limit theorem as in \cite{Krylov4} and \cite{Song}, but with an improved convergence rate. To this end, choosing
$\Delta=1/n$, by the representation formula (\ref{clt_representation}), we have $$u^{1/n}(1,0)=\sube[\phi(\sum_{i=1}^n \frac{X_i}{\sqrt{n}})],$$ where $X_1=X$, $X_{i+1} \stackrel{d}{=} X_i$ and $X_{i+1}$ is independent of $(X_1,...,X_i)$ for each $i=1,...,n-1$. On the other hand, since $G(p,A)=\sube[\frac12\langle AX,X\rangle]$ and $\zeta=0$, by (\ref{solution}), we have $$u(1,0)=\widetilde{\mathbb{E}}[\phi(\xi)]=\mathcal{N}_G(\phi),$$ where $\mathcal{N}_G$ denotes the corresponding G-normal distribution. Under Assumption \ref{assumption1}, Theorem \ref{Theorem_CLT} then yields the following \emph{central limit theorem} in the degenerate case
\begin{equation}\label{CLT_1}
\left|\sube[\phi(\sum_{i=1}^n \frac{X_i}{\sqrt{n}})]-\mathcal{N}_G(\phi)\right|\leq Cn^{-\beta/6}.
\end{equation}
Moreover, if the second moment of the random vector $X$ is non-degenerate, i.e. $\underline{\sigma}^2:=-\sube[-|X|^2]>0,$
and the initial data $\phi\in\mathcal{C}_b^1(\mathbb{R}^d)$, i.e. $\phi$ is bounded and Lipschitz continuous, then
\begin{equation}\label{CLT_2}
\left|\sube[\phi(\sum_{i=1}^n \frac{X_i}{\sqrt{n}})]-\mathcal{N}_G(\phi)\right|\leq Cn^{-\max\{\frac{\alpha}{2},\frac16\}}.
\end{equation}

Note that the above convergence rate in (\ref{CLT_1}) for the degenerate case improves Theorem 1.1 of Krylov \cite{Krylov4}, where the author considers a one-dimensional stochastic control problem and obtains the convergence rate $\frac{\beta^2}{4+2\beta}(\leq \frac{\beta}{6})$. Moreover, the convergence rate in (\ref{CLT_2}) for the non-degeneate case improves Theorem 4.5 of Song \cite{Song}, where the author obtains the convergence rate $\frac{\alpha}{2}$.

\subsection{\textcolor[rgb]{1.00,0.00,0.00}{Linear central limit theorem}}

We conclude the paper with a brief discussion of the convergence rate of the linear central limit theorem via a monotone scheme. First, we consider the smooth test function $\phi \in \mathcal{C}_b^{2+1}(\mathbb{R})$, meaning it is twice differentiable, with its second-order derivative being uniformly Lipschitz continuous. In this case, we obtain the following central limit theorem. The proof is similar to the proof of Theorem \ref{Theorem_CLT} and is omitted.

\begin{corollary}
Let $\{X_i\}_{i\geq 1}$ be a sequence of real-valued i.i.d. random variables with finite third moment $M_X^3<\infty$ and $S_n=\sum_{{i=1}}^{n}\frac{X_i}{\sqrt{n}}$. Then, for $\phi\in \mathcal{C}_b^{2+1}(\mathbb{R})$,
\begin{equation}\label{smooth_case}
\left|\mathbb{E}\left[\phi\left(S_n\right)\right]-\mathbb{E}[\phi(\xi)]\right| \leq\left(1+M_X^3\right)\left[\phi_{xx}\right]_{\mathcal{C}^1} n^{-\frac{1}{2}},
\end{equation}
where $\xi$ follows standard normal distribution.
\end{corollary}

However, the linear central limit theorem requires the test function $\phi$ to a step function, denoted as $\phi(\cdot) = \mathbf{1}_{{\cdot \leq x}}$ parameterized by $x \in \mathbb{R}$. Therefore, the above convergence rate result does not apply directly to such a test function $\phi(\cdot)$. To obtain the convergence rate for this test function $\phi(\cdot)$, it is noteworthy that, by integration by parts, we have
\begin{align}\label{IBP}
& \int \phi_x(x)\left[G(x)-F_n(x)\right] d x \notag\\
= & \int \phi(x) d F_n(x)-\int \phi(x) d G(x) \notag\\
= &\ \mathbb{E}\left[\phi\left({S}_n\right)\right]-\mathbb{E}[\phi(\xi)],
\end{align}
where $F_n(x)=\mathbb{E}[\mathbf{1}_{\{S_n\leq x\}}]$ and $G(x)=\mathbb{E}[\mathbf{1}_{\{\xi\leq x\}}]$.

\begin{proposition}
Let $\{X_i\}_{i\geq 1}$ be a sequence of real-valued i.i.d. random variables with finite third moment $M_X^3<\infty$ and $S_n=\sum_{{i=1}}^{n}\frac{X_i}{\sqrt{n}}$. Then,
$$
\left|F_n-G\right|_0 \leq\left(M_X^3+5\right) n^{-\frac{1}{8}} .
$$
\end{proposition}

\begin{proof} Let $h: \mathbb{R} \rightarrow \mathbb{R}$ be such that $h(0)=0$ and
$$
h_x(x)=x^2 \mathbf{1}_{\{0 \leq x \leq 1\}}+\left(2-(x-2)^2\right) \mathbf{1}_{\{1<x \leq 3\}}+(x-4)^2 \mathbf{1}_{\{3<x \leq 4\}}.
$$
Then, it is clear that $h \in \mathcal{C}_b^{2+1}(\mathbb{R})$ with $\left[h_{xx}\right]_{\mathcal{C}^1}=2$. Define for any $a \in \mathbb{R}$ and $\varepsilon>0$,
$$
\phi_{a, \varepsilon}(x):=h\left(\frac{x-a}{\varepsilon}\right).
$$
We then have $\phi_{a, \varepsilon} \in \mathcal{C}_b^{2+1}(\mathbb{R})$ with $\left[(\phi_{a, \varepsilon})_{xx}\right]_{\mathcal{C}^1}=2 \varepsilon^{-3}$. Applying $\phi_{a, \varepsilon}$ to (\ref{smooth_case}) and (\ref{IBP}) yields
\begin{equation}\label{1st_estimate}
\left|\int (\phi_{a, \varepsilon})_x(x)\left[G(x)-F_n(x)\right] d x\right| \leq 2\left(M_X^3+1\right) \varepsilon^{-3} n^{-\frac{1}{2}} .
\end{equation}

Focussing on the integral of the above left hand side, we further have that by
change of variable $y=\frac{x-a}{\varepsilon}$,
\begin{align}\label{2nd_estimate}
& \int (\phi_{a, \varepsilon})_x(x)\left[G(x)-F_n(x)\right] d x \notag\\
= & \int h_x(y)\left[G(a+\varepsilon y)-F_n(a+\varepsilon y)\right] d y \notag\\
\leq & \left[G(a)-F_n(a)+4 \varepsilon\right] \int_0^4 h_x(y) d y \notag\\
= &\ 4\left[G(a)-F_n(a)+4 \varepsilon\right],
\end{align}
where we use that $h_x \geq 0,[G]_{\mathcal{C}^1} \leq 1$ and that $F_n$ is monotone. Combining (\ref{1st_estimate}) and (\ref{2nd_estimate}), we have
$$
F_n(a)-G(a) \leq \frac{1}{2}\left(M_X^3+1\right) \varepsilon^{-3} n^{-\frac{1}{2}}+4 \varepsilon \leq\left(M_X^3+5\right) n^{-\frac{1}{8}}
$$
by letting $\varepsilon=n^{-1 / 8}$. Similarly,
$$
\begin{aligned}
& \int h_x(y)\left[G(a+\varepsilon y)-F_n(a+\varepsilon y)\right] d y \\
\geq & \left[G(a+4 \varepsilon)-F_n(a+4 \varepsilon)-4 \varepsilon\right] \int_0^4 h_x(y) d y \\
= &\ 4\left[G(a+4 \varepsilon)-F_n(a+4 \varepsilon)-4 \varepsilon\right],
\end{aligned}
$$
and thus
$$
G\left(a+4 n^{-1 / 8}\right)-F_n\left(a+4 n^{-1 / 8}\right) \leq\left(M_X^3+5\right) n^{-\frac{1}{8}}.
$$
By the arbitrary of $a$, we conclude that
$$
\left|F_n-G\right|_0 \leq\left(M_X^3+5\right) n^{-\frac{1}{8}} .
$$
\end{proof}

Although the rate of $1/8$ we established is slower than that of $1/2$ in the Berry-Esseen theorem due to the introduction of an approximation to step functions by $\mathcal{C}_b^{2+1}$ functions ($\phi_{a, \varepsilon}$ in the proof above), we have demonstrated a much simpler way to establish a rate of Berry-Esseen type convergence. In fact, this also addresses the challenge encountered by Stein's method when applied to establish a lower convergence rate than $1/2$. Establishing the convergence rate of $1/2$ for the linear central limit theorem is more challenging and will be left for future research.

\begin{appendix}
\section{Proof of Proposition \ref{BSB_theorem}}
The convergence rate $\frac{\alpha}{2}$ follows immediately from Theorem \ref{maintheorem}(iii). To establish the other convergence rate $\frac{1}{4}$, we only need to prove the following consistency error estimate
\begin{align}\label{operator_E_estimate}
\mathcal{E}(\Delta,\psi)&:=
\left|\frac{\mathbf{S}(\Delta)\psi-\psi}{\Delta}-G(D\psi,D^2\psi)\right|_{0}\notag\\
&\leq C\Delta(|D^4\psi|_0+|D^3\psi|_0+|D^2\psi|_0),
\end{align}
for any test function $\psi\in\mathcal{C}_b^{\infty}(\mathbb{R})$.
Note that (\ref{operator_E_estimate}) is a refinement of the consistency error estimate in Proposition \ref{operator}.
The rest of the proof then follows along a similar argument and procedure used in the proof of Theorem \ref{maintheorem}. To establish (\ref{operator_E_estimate}), with $Y=r-\frac12X^2$, we have
\begin{align*}
 &\ \mathbf{S}(\Delta)\psi(x)-\psi(x)-\Delta G(D\psi(x),D^2\psi(x)) \\
 \le &\
 \sube[\psi(x+\sqrt{\Delta}X+\Delta Y)-\psi(x)-\Delta D\psi(x)Y-\frac12 \Delta D^2\psi(x)X^2] \\
 \le &\
 \sube[\psi(x+\sqrt{\Delta}X)-\psi(x)-\frac12  D^2\psi(x)\Delta X^2]\\
 &\ +
 \sube[\psi(x+\sqrt{\Delta}X+\Delta Y)-\psi(x+\sqrt{\Delta}X)- D\psi(x+\sqrt{\Delta}X)\Delta Y]\\
 &\ +\sube[D\psi(x+\sqrt{\Delta}X)\Delta Y- D\psi(x)\Delta Y]:=(I)+(II)+(III).
\end{align*}

Since $\sube[X]=\sube[-X]=\sube[X^3]=\sube[-X^3]=0$, we have
\begin{align*}
(I) = &\
\sube\left[\sqrt{\Delta}D\psi(x)X+\int_x^{x+\sqrt{\Delta}X}\int_x^s(D^2\psi(u)-D^2\psi(x))duds\right] \\
= &\
\sube\left[\int_x^{x+\sqrt{\Delta}X}\int_x^s\int_x^uD^3\psi(p)dpduds\right] \\
= &\
\sube\left[\textcolor[rgb]{1.00,0.00,0.00}{\frac16\Delta^{3/2}D^3\psi(x)X^3}+\int_x^{x+\sqrt{\Delta}X}\int_x^s\int_x^u(D^3\psi(p)-D^3\psi(x))dpduds\right] \\
= &\
\sube\left[\int_x^{x+\sqrt{\Delta}X}\int_x^s\int_x^u(D^3\psi(p)-D^3\psi(x))dpduds\right] \\
\le &\
|D^4\psi|_0\sube\left|\int_x^{x+\sqrt{\Delta}X}\int_x^s\int_x^u|p-x|dpduds\right| \le \Delta^2 |D^4\psi|_0 M_X^4.
\end{align*}
Likewise, Taylor's expansion yields that
\begin{align*}
(II)=&\ \sube\left[\int_0^1(1-s)D^2\psi(x+\sqrt{\Delta}X+s\Delta Y)\Delta^2Y^2ds\right]\\
\leq &\
\sube\left[\int_0^1(1-s)ds|D^2\psi|_0\Delta^2Y^2\right]\leq \frac12\Delta^2|D^2\psi|_0M_Y^2,
\end{align*}
and
\begin{align*}
(III)=&\ \sube\left[\int_0^1D^2\psi(x+s\sqrt{\Delta}X)\sqrt{\Delta}Xds\Delta Y\right]\\
= &\
\sube\left[\int_0^1[D^2\psi(x+s\sqrt{\Delta}X)-D^2\psi(x)]ds\Delta^{\frac32} X Y+D^{2}\psi(x)\Delta^{\frac32} X Y\right]\\
\leq &\
\sube\left[\int_0^1[D^2\psi(x+s\sqrt{\Delta}X)-D^2\psi(x)]ds\Delta^{\frac32} X Y\right]+\sube\left[D^{2}\psi(x)\Delta^{\frac32} X Y\right]\\
=&\
\sube\left[\int_0^1\int_0^1D^3\psi(x+us\sqrt{\Delta}X)s\sqrt{\Delta}Xduds\Delta^{\frac32} X Y\right]\\
\leq&\
\sube\left[\int_0^1\int_0^1sduds|D^3\psi|_0\Delta^2|X|^2|Y|\right]\leq \frac14\Delta^2|D^3\psi|_0(M_X^4+M_Y^2),
\end{align*}
where we also used the fact that $\sube[XY]=\sube[-XY]=0$. The consistency error estimate (\ref{operator_E_estimate}) then follows by combining (I)-(III).
\end{appendix}

\small


\end{document}